\documentclass[11pt]{article}
\usepackage[T1]{fontenc}
\usepackage[english]{babel}
\usepackage{microtype}
\usepackage{amsmath,amssymb,amsfonts,amsthm,mathtools}
\usepackage[shortlabels]{enumitem}
\usepackage{geometry}
\usepackage{xcolor}
\usepackage[colorlinks=true,linkcolor=blue,citecolor=blue,urlcolor=blue]{hyperref}
\newtheorem{theorem}{Theorem}[section]
\newtheorem{lemma}[theorem]{Lemma}
\newtheorem{proposition}[theorem]{Proposition}
\newtheorem{corollary}[theorem]{Corollary}
\newtheorem{observation}[theorem]{Observation}
\theoremstyle{remark}
\newtheorem{remark}[theorem]{Remark}

\newcommand{\tr}{\operatorname{tr}}
\newcommand{\dist}{\operatorname{dist}}

\title{Sharp edge-spectral supersaturation for odd cycles}
\author{{ Jiaqi Liu$^a$}\thanks{ School of Mathematics, University of Shanghai for Science and Technology, Shanghai 200093, China. E-mail address: m19821217592@163.com},\ \ { Zhenzhen Lou$^a$}\thanks{Corresponding author. School of Mathematics, University of Shanghai for Science and Technology, Shanghai 200093, China and Extremal Combinatorics and Probability Group (ECOPRO), Institute for Basic Science (IBS), Daejeon, South Korea.  E-mail address: louzz@usst.edu.cn. 
},\ \
{ Shuang Sun$^b$}\thanks{School of Mathematical Sciences, Shanghai Jiao Tong University, Shanghai 200240, China. E-mail address: chocolatesun@sjtu.edu.cn.}
}
\date{}

\begin{document}
\maketitle
\vspace{-0.5cm}

\begin{abstract}
Let \(G\) be a graph with \(m\) edges and adjacency spectral radius
\(\rho(G)\), and let \(N(C_{2k+1},G)\) denote the number of copies of
\(C_{2k+1}\) in \(G\). For each fixed integer \(k\ge 2\), define
\(
g_k(m):=\frac{k-1+\sqrt{4m-k^2+1}}{2}.
\)
Li, Zhai and Shu [European J. Combin., 2024] determined the spectral extremal threshold for odd
cycles by proving that, for all sufficiently large \(m\), every
\(C_{2k+1}\)-free graph \(G\) with \(m\) edges satisfies
\(\rho(G)\le g_k(m)\).

We establish the asymptotically sharp supersaturation counterpart of
their result. More precisely, for every fixed integer \(k\ge 2\), we
prove that
\[
\inf_{\substack{e(G)=m\\ \rho(G)>g_k(m)}}
\frac{N(C_{2k+1},G)}{m^k}
=
\frac{\lceil k^2/2\rceil (k-1)!}{(k+1)^k}+o(1)
\qquad\text{as }m\to\infty.
\]
Thus every \(m\)-edge graph whose spectral radius exceeds the
\(C_{2k+1}\)-free threshold contains at least
\[
\Big(
\frac{\lceil k^2/2\rceil (k-1)!}{(k+1)^k}-o(1)
\Big)m^k
\]
copies of \(C_{2k+1}\), and the leading constant is asymptotically
best possible.

In particular, taking \(k=2\), we obtain if
\(
\rho(G)>\frac{1+\sqrt{4m-3}}{2}\)
then 
\(
N(C_5,G)\ge \left(\frac{2}{9}-o(1)\right)m^2,
\)
with the constant \(2/9\) being asymptotically optimal. This answers
a question of Chen, Li and Tang concerning the existence and the
largest possible value of a constant \(C>0\) for which the same
spectral condition guarantees at least \(Cm^2\) copies of \(C_5\).
More generally, our result resolves a recent problem of Li, Lin, Liu
and Zhang on spectral supersaturation for odd cycles. The proof
combines spectral stability and resolvent analysis with estimates for
odd spectral moments and a careful treatment of non-injective closed
walks.

\smallskip
\noindent\textbf{Keywords:} odd cycles; spectral radius; edge-spectral
supersaturation; closed walks.

\noindent\textbf{AMS subject classifications:} 05C35; 05C50; 05C38.
\end{abstract}

\section{Introduction}
Spectral extremal graph theory asks how large an eigenvalue of a graph can be
under prescribed combinatorial restrictions.  In the fixed-size setting,
where the number of edges rather than the number of vertices is prescribed,
a basic problem of Brualdi--Hoffman--Tur\'an type is to determine
$$
   \max\{\rho(G):e(G)=m,\ H\nsubseteq G\}
$$
for a fixed graph $H$, where $\rho(G)$ denotes the adjacency spectral
radius of $G$.  Once the corresponding extremal threshold is known, a
natural next question is one of \emph{spectral supersaturation}: how many
copies of $H$ must occur when the spectral radius is strictly above this
threshold?
This paper answers this question asymptotically, with
the best possible leading constant, for every odd cycle.

Supersaturation originates from classical extremal graph theory.  Rather
than merely guaranteeing the existence of a forbidden graph once an
extremal threshold is exceeded, one asks for the minimum number of copies
that must occur.  The general theory was developed by Erd\H{o}s and
Simonovits \cite{Erdos-Simonovits-1983}, and sharp counting results are known
for several important classes of graphs; see, for example, Mubayi
\cite{Mubayi-2010} and Pikhurko and Yilma
\cite{Pikhurko-Yilma-2017} for color-critical graphs.

A spectral version replaces edge excess by an eigenvalue condition.
Foundational results of Nikiforov
\cite{Nikiforov-2002,Nikiforov-2009,Nikiforov-2009-saturation}
and Bollob\'as and Nikiforov \cite{Bollobas-Nikiforov-2007} showed that
spectral thresholds not only force the occurrence of prescribed subgraphs,
but can also force quantitatively many copies.  More recently, this
viewpoint has led to sharp spectral supersaturation results in both the
fixed-order and fixed-size settings; see, for instance,
\cite{Li-Feng-Peng-2025a,Li-Feng-Peng-2025b,Li-Lu-Peng-2024,
Fang-Li-Lin-Ma-2025}.

The fixed-size, or edge-spectral, setting has a rather different geometry
from the fixed-order setting.  Here isolated vertices are immaterial and
the relevant extremal graphs are often highly unbalanced.  The first basic
threshold is Nosal's inequality: an $m$-edge triangle-free graph satisfies
$\rho(G)\le\sqrt m$.  At this threshold, Ning and Zhai
\cite{Ning-Zhai-2023} obtained sharp counting results for triangles and
initiated the corresponding study of four-cycles
\cite{Ning-Zhai-2025}. 
A substantial extension from $C_4$ to arbitrary even cycles was recently
obtained by Li, Lin, Liu and Zhang
\cite{Li-Lin-Liu-Zhang-2026}.  They developed spectral Sidorenko
inequalities and applied them to edge-spectral supersaturation for complete
bipartite graphs and even cycles.  Writing $S_{k-1,m}$ for the corresponding
$m$-edge split graph, they proved, for every fixed $k\ge2$, that if
$
   \rho(G)>\rho(S_{k-1,m}),
$
then
$$
   N(C_{2k},G)
   \ge
   \left(
      \frac{(k-1)!}{2k^k}-o(1)
   \right)m^k.
$$
The constant $(k-1)!/(2k^k)$ is asymptotically best possible.  They also
obtained sharp asymptotic edge-spectral supersaturation results for complete
bipartite graphs $K_{k,k}$.  Thus the sharp leading coefficient is now
understood for a broad family of bipartite graphs, including all even cycles.

Sharp counting results are also available at the opposite end of the
chromatic spectrum.  Let $F$ be a color-critical graph of order $f$ with
$
   \chi(F)=r+1\ge4.
$
Li, Liu and Zhang \cite{Li-Liu-Zhang-2025-turan} established the corresponding
edge-spectral Tur\'an theorem: every sufficiently large $m$-edge
$F$-free graph satisfies
$$
   \rho(G)\le
   \sqrt{2\big(1-\frac1r\big)m},
$$
with equality precisely for regular complete $r$-partite graphs.
Fang, Lin and Zhai \cite{Fang-Lin-Zhai-2026} subsequently determined the
asymptotically sharp number of copies forced at this threshold.  To state
their result, let $c(n,F)$ be the minimum number of copies of $F$ created
by adding one edge to the Tur\'an graph $T_{n,r}$, and write
$
   c(n,F)
   =
   \bigl(\alpha_F+o(1)\bigr)n^{f-2}.
$
They proved that if
$
   \rho(G)\ge
   \sqrt{2\big(1-\frac1r\big)m},
$
then, unless $G$ is a regular complete $r$-partite graph,
$$
   N(F,G)
   \ge
   \Big[
      \Big(\frac{2r}{r-1}\Big)^{(f-2)/2}
      \alpha_F-o(1)
   \Big]
   m^{(f-2)/2},
$$
and both the exponent and the leading constant are best possible.
In particular, for $F=K_{r+1}$,
$$
   N(K_{r+1},G)
   \ge
   \Big[
      \Big(\frac{2}{r(r-1)}\Big)^{(r-1)/2}
      -o(1)
   \Big]
   m^{(r-1)/2}.
$$

More recently, Chen and Li \cite{Chen-Li-2026} obtained a sharp
surplus-dependent refinement, which may be viewed as an edge-spectral
counterpart of Mubayi's supersaturation theorem.  For every color-critical
$F$ with $\chi(F)=r+1\ge4$, there exists $\delta_F>0$ such that, whenever
$
   0<q\le\delta_F\sqrt m$
and $
   \rho(G)^2
   \ge
   2\left(1-\frac1r\right)m+q,
$
one has
$$
   N(F,G)
   \ge
   (B_F-o(1))q\,m^{(f-2)/2},
   \qquad
   B_F:=
   \frac{\alpha_F}{4}
   \Big(\frac{2r}{r-1}\Big)^{f/2},
$$
where $B_F$ is best possible.

These developments leave a particularly interesting gap at chromatic number
three.  Although odd cycles are among the most fundamental color-critical
graphs, the general sharp counting theory for color-critical graphs with
chromatic number at least four does not extend directly to them.  Indeed,
their edge-spectral extremal threshold has a different form and is governed
by highly unbalanced split graphs rather than regular multipartite graphs.
This makes the three-chromatic case substantially more delicate.

Fix an integer $k\ge2$ and define
$
   g_k(m):=
   \frac{k-1+\sqrt{4m-k^2+1}}{2}.
$
Zhai, Lin and Shu \cite{Zhai-Lin-Shu-2021} determined the maximum spectral
radius of $C_5$-free graphs with $m$ edges, and Li, Zhai and Shu
\cite{Li-Zhai-Shu-2024-cycles} extended their result to
$C_{2k+1}$ for every fixed $k\ge3$.  In particular, for every fixed
$k\ge2$ and all sufficiently large $m$, if
$
   C_{2k+1}\nsubseteq G$,
then $
   \rho(G)\le g_k(m).
$
Whenever $m-\binom{k}{2}$ is divisible by $k$, equality is attained by
$
   K_k\vee I_{(m-\binom{k}{2})/k}.
$
Hence $g_k(m)$ is the natural edge-spectral threshold for the appearance
of $C_{2k+1}$.

Very recently, Fang, Lin, and Zhai \cite{Fang-Lin-Zhai-2026-tripartite} initiated a systematic study of edge‑spectral supersaturation in this delicate 3-chromatic regime. As a corollary of their main results, they proved that for any fixed integer $k\ge 2$, if
$
\rho(G)>g_k(m),
$
then
$
N(C_{2k+1},G)=\Omega_k(m^k),
$
and they further demonstrated that the exponent $k$ is sharp.
Thus the correct polynomial
order is known, but the sharp leading coefficient remains undetermined.

Motivated by these results and by the sharp theory for even cycles and
higher-chromatic color-critical graphs, Li, Lin, Liu and Zhang
\cite{Li-Lin-Liu-Zhang-2026} posed the problem of determining the sharp
asymptotic value of
$$
   \inf_{\substack{e(G)=m\\ \rho(G)>g_k(m)}}
   \frac{N(C_{2k+1},G)}{m^k}.
$$
The main result of the present paper resolves this problem for every
$k\ge2$.

We determine this coefficient for every odd cycle as follows.

\begin{theorem}\label{thm:main}
For every fixed integer $k\ge2$,
$$
   \inf_{\substack{e(G)=m\\ \rho(G)>g_k(m)}}
   \frac{N(C_{2k+1},G)}{m^k}
   =
   \frac{\lceil k^2/2\rceil (k-1)!}{(k+1)^k}+o(1)
   \qquad\text{as }m\to\infty.
$$
Equivalently, for every $\varepsilon>0$, there exists
$m_0=m_0(k,\varepsilon)$ such that every graph $G$ with
$e(G)=m\ge m_0$ and
$
\rho(G)>g_k(m)
$
satisfies
$
N(C_{2k+1},G)
\ge
\left(
\frac{\lceil k^2/2\rceil (k-1)!}{(k+1)^k}
-\varepsilon
\right)m^k.
$
Moreover, the constant
$\frac{\lceil k^2/2\rceil (k-1)!}{(k+1)^k}$
is asymptotically best possible.
\end{theorem}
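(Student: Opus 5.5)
The plan has two halves: an upper-bound construction showing that the constant cannot be improved, and a matching lower bound for an arbitrary $G$ with $e(G)=m$ and $\rho(G)>g_k(m)$. I have not pinned down the construction; what follows commits to a candidate, and the step I expect to be hardest is the lower-bound counting in the third paragraph.

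\emph{Construction.} Since every $C_{2k+1}$-free graph has $\rho\le g_k(m)$, and $K_k\vee I_t$ attains $g_k$ exactly, I would look for perturbations of the split graph with $\Theta(m)$ independent vertices. These should push $\rho$ just above $g_k(m)$ while creating as few copies of $C_{2k+1}$ as possible. Every $C_{2k+1}$ in such a graph uses at most $k$ vertices of the independent side. A copy of order $m^k$ must therefore use exactly $k$ independent vertices, so it needs $k+1$ ``dominating'' vertices and one edge among them.

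The denominator $(k+1)^k$ corresponds to independent vertices of common degree about $k+1$, with $t\approx m/(k+1)$. So I would take a host of the form $H\vee I_t$ with $|H|=k+1$ and $H$ a suitable sparse graph, or a mixture in which part of $I_t$ sees all $k+1$ vertices and part sees only $k$. The edge set of $H$ would be tuned so that $\rho$ exceeds $g_k(m)$ by a vanishing amount, using the equitable-partition quotient matrix to compute $\rho$.

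Counting copies: choose $k$ of the $t$ independent vertices in order, then arrange them alternately with the $k+1$ vertices of $H$ around the cycle so that the single remaining adjacency is an edge of $H$. This gives $(\#\text{arrangements})\cdot t^k/k!$ copies. I expect the minimum over admissible $H$ of the arrangement count, divided by $k!/(k-1)!=k$, to equal $\lceil k^2/2\rceil$, which would give the stated constant. The ceiling should arise from a parity constraint on the optimal edge set of $H$ consistent with the spectral condition. Verifying this optimisation is a finite computation.

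\emph{Lower bound.} Take $G$ with $\rho(G)>g_k(m)$ and $m$ large. Step 1 is a spectral stability argument: via the Perron vector $x$ and the eigen-equation $\rho^2 x_u=\sum_{v}|N(u)\cap N(v)|x_v$, show that $G$ contains a set $S$ of about $k$ or $k+1$ vertices whose common neighbourhood carries almost all of the Perron weight, that is, $G$ is $o(m)$-close to $K_k\vee I_{m/k}$ in edit distance. I would take this stability input from the Fang--Lin--Zhai framework, which already yields $\Omega(m^k)$. Step 2 uses the resolvent $(\rho I-A_{G-S})^{-1}$ to convert the strict inequality $\rho>g_k(m)$ into a quantitative statement: the vertices outside $S$ must contribute a controlled excess. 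This is either one extra ``hub'' with linear degree, or linearly many edges among the $N(S)$-side. The two alternatives are parametrised by a single real variable.

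Step 3 counts $C_{2k+1}$ in each alternative through the odd moment $\tr(A^{2k+1})=\sum_i\lambda_i^{2k+1}$. The positive contribution $\rho^{2k+1}$ is compared with the extremal split-graph value. Closed walks of length $2k+1$ that are not cycles, that is, non-injective walks, are bounded by $O(m^{k-1/2})$ using the bounded size of $S$ and the degree structure from Step 1.

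\emph{Main obstacle.} The hardest step is Step 3 in the regime where the excess is spread thinly. There the non-injective walks through high-degree vertices of $S$ have the same order $m^k$ as the cycles, so the trace alone is not sharp. I would first try to classify closed walks by their pattern of visits to $S$ and subtract exactly the degenerate patterns, namely backtracking excursions at a hub. This should leave a lower bound that, after optimising the single parameter from Step 2, matches the construction's value $\lceil k^2/2\rceil (k-1)!/(k+1)^k$.
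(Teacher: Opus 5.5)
Your construction is essentially the paper's: $H\vee I_t$ with $|V(H)|=k+1$ and $e(H)=\lceil k^2/2\rceil$, plus a few disjoint edges to adjust $m$. One correction: $\rho$ does not exceed $g_k(m)$ ``by a vanishing amount''. Two block-constant test vectors give $\rho\ge\sqrt{(k+1)t}+e(H)/(k+1)+o(1)$, while $g_k(m)=\sqrt{(k+1)t}+(k-1)/2+o(1)$. So the requirement is $2e(H)>(k-1)(k+1)$, the excess is a positive constant, and the ceiling is just integrality of $e(H)$.

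The lower bound has a genuine gap. Step 1 is false as stated. The Fang--Lin--Zhai stability theorem only gives $\dist(G,K_{A,B})=o(m)$ for some disjoint $A,B$ of uncontrolled sizes. Your own extremal example is $\Theta(m)$ away from $K_k\vee I_{m/k}$. So is $K_{a,a}$ with a $2k$-regular graph added inside one side, which has $\rho-g_k(m)\to 1/2$. A proof must therefore handle a bounded side of every fixed size $r$ (not just $r\in\{k,k+1\}$) as well as the regime where both sides grow.

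More seriously, in the bounded-side regime, which is the one that determines the constant, the odd moment cannot be sharp. Walks revisiting the bounded side contribute $\Theta(m^k)$: for example, $K_k\vee I_t$ has $\tr(A^{2k+1})=\Theta(m^k)$ but no $C_{2k+1}$. Your plan to ``subtract degenerate patterns'' is unworked, and it would in any case be governed by the trace rather than by $\rho$.

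The missing idea is to let the resolvent convert $\rho>g_k(m)$ into a condition on the core itself, not into a hub-versus-edges dichotomy on the large side, and then to count cycles directly. Within $o(m)$ of $K_{S,D}$, edges on the large side enter only through a defect term that pushes $\rho$ below $g_k$. Concretely, let $D_0$ be the vertices whose neighbourhood is exactly $S$, let $H=G-D_0$, and let $\mathbf c$ be the indicator of $S$. The Schur complement gives $\Phi(\rho)=1$ for the decreasing function $\Phi(x)=|D_0|\sum_{j\ge0}\mathbf c^{\top}A_H^j\mathbf c/x^{j+2}$. Expanding $1-\Phi$ to second order at $\theta=g_k(m)$ shows $\Phi(\theta)<1$, hence $\rho<\theta$, unless $2e(G[S])>(k-1)|S|$. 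The only boundary case is $G=K_k\vee I$, where $\rho=g_k$ exactly. Once $s=e(G[S])$ satisfies $2s>(k-1)r$, the cycles with exactly one edge inside $S$ already number $s\,(k-1)!\binom{r-2}{k-1}k!\binom{|D_0|}{k}$. The quantity $(\lfloor (k-1)r/2\rfloor+1)(k-1)!\binom{r-2}{k-1}/r^k$ is minimized over $r\ge k+1$ at $r=k+1$, which yields the constant.

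In the balanced regime the trace is the right tool, but comparing $\rho^{2k+1}$ with a split-graph value is not enough, because $\lambda_N\approx-\rho$ nearly cancels it. You need $\lambda_1-|\lambda_N|\ge k-1-o(1)$, obtained from $\lambda_N^2\le 2m-\lambda_1^2$; this gives $\tr(A^{2k+1})\ge((2k+1)(k-1)-o(1))m^k$. You also need a proof that non-injective walks are $o(m^k)$ there. That proof requires first showing that the well-connected parts of $A$ and $B$ span only $O(|A|)$ and $O(|B|)$ internal edges, which is deduced from the assumed upper bound on $N(C_{2k+1},G)$.
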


Taking \(k=2\) in Theorem~\ref{thm:main}, we obtain the following
asymptotically sharp answer to the question of Chen, Li and Tang in \cite{Chen-Li-Tang}.

\begin{corollary}\label{cor:C5}
Every sufficiently large \(m\)-edge graph $G$ satisfying
\(
\rho(G)>\frac{1+\sqrt{4m-3}}{2}
\)
contains at least
\(
\left(\frac{2}{9}-o(1)\right)m^2
\)
copies of \(C_5\), and the constant \(2/9\) is asymptotically best
possible.
\end{corollary}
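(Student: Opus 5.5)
The corollary is a direct specialization of Theorem~\ref{thm:main}, so the plan is to substitute $k=2$ and check that the two quantities in the theorem reduce to those in the corollary. I do not intend to redo any spectral or counting argument; everything substantive comes from Theorem~\ref{thm:main}.

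\emph{Threshold.} With $k=2$ we get
$$
g_2(m)=\frac{2-1+\sqrt{4m-4+1}}{2}=\frac{1+\sqrt{4m-3}}{2},
$$
which is exactly the hypothesis $\rho(G)>\frac{1+\sqrt{4m-3}}{2}$ of Corollary~\ref{cor:C5}. Note also that $C_{2k+1}=C_5$ and $m^k=m^2$.

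\emph{Constant.} Since $\lceil 2^2/2\rceil=2$, $(2-1)!=1$ and $(2+1)^2=9$, the leading coefficient is
$$
\frac{\lceil k^2/2\rceil (k-1)!}{(k+1)^k}=\frac{2\cdot 1}{9}=\frac{2}{9}.
$$

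\emph{Lower bound.} Fix $\varepsilon>0$. The ``equivalently'' form of Theorem~\ref{thm:main} gives $m_0=m_0(2,\varepsilon)$ such that every $G$ with $e(G)=m\ge m_0$ and $\rho(G)>g_2(m)$ satisfies
$$
N(C_5,G)\ge\Big(\frac{2}{9}-\varepsilon\Big)m^2.
$$
Since $\varepsilon>0$ is arbitrary, this is the bound $N(C_5,G)\ge\bigl(\frac{2}{9}-o(1)\bigr)m^2$ for all sufficiently large $m$.

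\emph{Sharpness.} The final clause of Theorem~\ref{thm:main} says the constant is asymptotically best possible. Concretely, the infimum of $N(C_5,G)/m^2$ over $m$-edge graphs with $\rho(G)>g_2(m)$ equals $\frac{2}{9}+o(1)$. Hence no constant larger than $2/9$ can replace it in the corollary. I take the extremal construction to be whatever the paper uses to prove the upper bound in Theorem~\ref{thm:main} at $k=2$; I do not construct or verify it here.

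Checking the two arithmetic substitutions is routine, so there is no real obstacle in this step.
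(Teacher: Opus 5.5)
Your proposal is correct and matches the paper: the corollary is obtained by setting $k=2$ in Theorem~\ref{thm:main}, where $g_2(m)=\frac{1+\sqrt{4m-3}}{2}$ and $\lceil k^2/2\rceil (k-1)!/(k+1)^k=2/9$. For the sharpness clause, the extremal example is Proposition~\ref{prop:construction} with $k=2$: take $H$ to be a path on three vertices, form $H\vee I_t$, and add disjoint copies of $K_2$ to make the edge count exactly $m$.
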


Theorem~\ref{thm:main} therefore completes, for odd cycles, the
sharp-constant picture suggested by the recent edge-spectral
supersaturation theory for even cycles.  A notable distinction between the
two settings is that the extremal threshold for an odd cycle contains a
constant-order shift from the bipartite scale $\sqrt m$, reflecting the
presence of a small non-bipartite core.  Moreover, spectral moment methods
naturally count closed walks, whereas $N(C_{2k+1},G)$ counts injective
cycles.  Controlling the contribution of non-injective odd closed walks at
the precise $m^k$-scale is one of the main difficulties in obtaining the
sharp coefficient.

We briefly describe the proof.  A spectral supersaturation-stability theorem
of Fang, Lin and Zhai \cite{Fang-Lin-Zhai-2026} first reduces a putative
counterexample to a graph whose edge set differs from that of a complete
bipartite graph in $o(m)$ edges.  We then distinguish two regimes according
to the sizes of the two bipartition classes.
When one bipartition class remains bounded, the spectral radius is sensitive
to the finite non-bipartite core.  A Schur-complement expansion at the
critical value $g_k(m)$ identifies $K_k$ as the unique critical core;
every competing configuration forces sufficiently many copies of
$C_{2k+1}$.
When both bipartition classes tend to infinity, the
$(2k+1)$-spectral moment becomes more effective.  The quantity
$
   \tr\bigl(A(G)^{2k+1}\bigr)
$
counts all closed walks of length $2k+1$, including those with repeated
vertices.  Using the near-bipartite structure together with an edge-cover
estimate, we prove that non-injective closed walks contribute only
$o(m^k)$.  The spectral moment can therefore be converted, at the required
precision, into a lower bound for the number of copies of $C_{2k+1}$.
Combining the two regimes yields the lower bound in
Theorem~\ref{thm:main}, while an explicit construction proves that $\frac{\lceil k^2/2\rceil (k-1)!}{(k+1)^k}$
cannot be improved.

The remainder of the paper is organized as follows. Section~2 contains
the preliminaries and the sharp construction. Section~3 treats the two
cases after the stability reduction. Section~4 completes the proof of
the main theorem.

\section{Preliminaries}
Throughout the paper, all graphs are finite and simple. For a graph $G$,
we write $V(G)$, $E(G)$, and $e(G)$ for its vertex set, edge set, and
number of edges, respectively. Let $A(G)$ denote the adjacency matrix of
$G$, and let $\rho(G)$ be its spectral radius. For graphs $F$ and $G$,
we denote by $N(F,G)$ the number of unoriented copies of $F$ in $G$.
For $X\subseteq V(G)$ and $v\in V(G)$, write
$
N_X(v):=N_G(v)\cap X
$
and
$
d_X(v):=|N_X(v)|.
$

When two graphs $G$ and $H$ are regarded as graphs on a common vertex
set, with isolated vertices added if necessary, we define
$
   \dist(G,H):=|E(G)\triangle E(H)|.
$
For disjoint sets $A,B\subseteq V(G)$, let $K_{A,B}$ denote the complete
bipartite graph with parts $A$ and $B$, viewed as a graph on $V(G)$
with all vertices outside $A\cup B$ isolated. As usual, $K_{a,b}$
denotes the complete bipartite graph with part sizes $a$ and $b$.

A closed walk
$
   v_0v_1\cdots v_{j-1}v_0
$
is called \emph{non-injective} if the vertices
$v_0,v_1,\ldots,v_{j-1}$ are not pairwise distinct. An \emph{edge cover} of a graph $G$ is a set
	$\mathcal{F}\subseteq E(G)$ such that every vertex of $G$ is incident
	with at least one edge in $\mathcal{F}$. Subscripts on
$O(\cdot)$, $\Omega(\cdot)$, and positive constants indicate the
parameters on which the implied constants may depend. Unless otherwise
specified, all $o(1)$-terms are taken with respect to the asymptotic
parameter appearing in the relevant statement.

We shall also use the following stability theorem. For a graph $F$, let
$\beta'(F)$ denote the minimum size of an independent vertex cover of
$F$.

\begin{lemma}[Fang--Lin--Zhai \cite{Fang-Lin-Zhai-2026}, Theorem~1.3]
\label{thm:flz-stability}
Let $F$ be a fixed graph of order $f$ with $\chi(F)=r+1\ge2$, and let
$G$ be a sufficiently large $m$-edge graph satisfying
$N(F,G)=o(m^{f/2})$. For every $\varepsilon>0$, there exists
$\delta>0$ such that the following statements hold.
\begin{enumerate}
\item[(i)] If $r\ge3$ and
$\rho(G)\ge\sqrt{2m(1-1/r-\delta)}$, then there is an $r$-partite
Tur\'an graph $T_{n,r}$ on $n$ vertices, with
$V(T_{n,r})\subseteq V(G)$ and
$\dist(G,T_{n,r})\le\varepsilon m$.
\item[(ii)] If $\rho(G)\ge\sqrt{(1-\delta)m}$ and either $r=2$, or
$r=1$ and $\beta'(F)\ge2$, then there are disjoint sets
$U,V\subseteq V(G)$ such
that $\dist(G,K_{U,V})\le\varepsilon m$.
\end{enumerate}
\end{lemma}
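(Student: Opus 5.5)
Since Lemma~\ref{thm:flz-stability} is quoted from \cite{Fang-Lin-Zhai-2026}, we only outline how we would prove it; the paper itself invokes it as a black box. The plan is to reduce the sparse, fixed-size situation to a dense graph on $O(\sqrt m)$ vertices, where classical removal and stability tools apply. We would then transfer the resulting structure back to $G$ at a cost of $o(m)$ edges.

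\emph{Step 1: localization.} Let $\mathbf x$ be the unit Perron vector of $A(G)$ and set $\lambda=\rho(G)\ge\sqrt{(1-\delta)m}$ (in case (i), $\lambda\ge \sqrt{2m(1-1/r-\delta)}$).
\begin{itemize}
\item From $\lambda x_v\le \sqrt{d(v)}$ and $\lambda^2=\sum_{uv\in E}2x_ux_v$, vertices of small degree or small Perron weight contribute only $O(\eta)\lambda^2$ to the quadratic form $\mathbf x^{T}A\mathbf x$.
\item Let $W=\{v: x_v\ge \eta/\sqrt[4]{m}\}$. Then $|W|\le \eta^{-2}\sqrt m$.
\item Deleting all edges outside $G[W]$ and the edges of $G[W]$ with negligible weight changes $\mathbf x^{T}A\mathbf x$ by $O(\eta)\lambda^2$. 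The remaining weight is carried by a subgraph $G_0\subseteq G[W]$ with $|W|=O_\eta(\sqrt m)$ and $e(G_0)=\Theta(m)$.
\end{itemize}
So $G_0$ is dense on its vertex set. The hypothesis $N(F,G)=o(m^{f/2})$ becomes $N(F,G_0)=o(|W|^f)$.

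\emph{Step 2: dense removal and stability.} By the graph removal lemma, $G_0$ can be made $F$-free by deleting $o(|W|^2)=o(m)$ edges.
\begin{itemize}
\item \emph{Case $r\ge 2$.} We use a weighted Motzkin--Straus argument. The quantity $\mathbf x^{T}A\mathbf x$, after blowing up vertices according to $x_v^2$, is at most $2m(1-1/r)+o(m)$ for nearly $F$-free graphs; this uses Erdős--Stone in weighted form, since $F$-freeness forces $K_{r+1}$-homomorphism density $o(1)$. The near-equality assumption then gives, by Erdős--Simonovits stability in its weighted form, a partition of $W$ into $r$ classes with $o(m)$ edges inside classes.
\item \emph{Part (i), $r\ge3$.} Equality in the edge-count constraint forces the classes to be balanced and almost complete between one another. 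This yields a Turán graph $T_{n,r}$ at distance $\le\varepsilon m$.
\item \emph{Part (ii), $r=2$.} The same argument produces a nearly complete bipartite $K_{U,V}$.
\end{itemize}

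\emph{Part (ii) with $r=1$ and $\beta'(F)\ge2$.} Here $F$ is bipartite but is not a subgraph of any star-like graph $K_{1,t}$-plus-pendants. The extremal structures for $\rho^2\approx m$ are then complete bipartite graphs. We would use the spectral Sidorenko/Kővári–Sós–Turán-type counting: $\lambda^{2}\ge(1-\delta)m$ forces many homomorphic copies of $F$ unless the weighted graph is close to $K_{U,V}$. We then convert homomorphisms to copies via Step 1's density.

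\emph{Step 3: transfer back.} Since all discarded edges of $G$ carry total weight $O(\eta)\lambda^2$, and the bound $\rho^2\le m$ (resp.\ $2m(1-1/r)$) is nearly tight, the number of edges discarded in Step 1 must be $O(\eta)m$. Hence $\dist(G,K_{U,V})$, resp.\ $\dist(G,T_{n,r})$, is at most $\varepsilon m$ once $\eta,\delta$ are small enough.

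\emph{Expected main obstacle.} We expect Step 1, together with its converse in Step 3, to be the hardest part. The difficulty is showing that spectral near-extremality forces not only the Perron weight but almost all \emph{edges} to lie in the dense core. We would attack this by comparing $\lambda^2$ with $2\sum_{uv}x_ux_v$ and using Cauchy–Schwarz on the edges outside $W$ to show each such edge wastes a fixed fraction of its potential contribution.
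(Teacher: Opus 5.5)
The paper does not prove this lemma; it imports it from Fang--Lin--Zhai as a black box. Your outline therefore has to stand on its own, and it has a genuine gap in Step~1, on which Steps~2 and~3 depend. A small slip first: for a unit Perron vector $\mathbf x^{\top}A\mathbf x=\lambda$, not $\lambda^2$. More seriously, the edges not inside $G[W]$ contribute at least $\sum_{v\notin W}x_v(A\mathbf x)_v=\lambda\sum_{v\notin W}x_v^2$ to this form. Discarding them is harmless only if almost all Perron \emph{mass} lies in $W$, and nothing forces that.

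Take $G=K_{a,b}$ with $a=\lfloor m^{1/4}\rfloor$ and $ab=m$. Then $\rho(G)^2=m$ and $N(F,G)=0$ for every non-bipartite $F$, so all hypotheses of (ii) with $r=2$ hold. The Perron entries are $1/\sqrt{2a}$ on the small side and $1/\sqrt{2b}\approx m^{-3/8}/\sqrt2<\eta m^{-1/4}$ on the large side. Hence $W$ is exactly the small side, $G[W]$ is empty, and half the Perron mass lies outside $W$.

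No other threshold rescues this. Every set of $O(\sqrt m)$ vertices spans only $O(a\sqrt m)=o(m)$ edges of $G$, so a core $G_0$ with $|V(G_0)|=O(\sqrt m)$ and $e(G_0)=\Theta(m)$ does not exist. The dense removal lemma and Erd\H{o}s--Simonovits stability then have nothing to act on. Your proposed Cauchy--Schwarz repair cannot work either, because here the edges outside $W$ waste nothing: $G$ is exactly extremal. The same happens for stars and for $K_k\vee I_t$, where $W$ is the $k$-clique. These are precisely the unbalanced configurations this paper must treat in Proposition~\ref{prop:bounded}. In (i) your localization is at least consistent with the extremal graphs (regular complete $r$-partite graphs on $\Theta(\sqrt m)$ vertices), but in (ii) it is false.

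For $r=1$ the failure is total. Let $F$ be connected, bipartite and not a star, with classes of sizes $2\le f_1\le f_2$. Then every $K_{a,b}$ with $ab=m$ and $f_1\le a\le b$ contains $\Omega(a^{f_1}b^{f_2})=\Omega(m^{f_1}b^{f_2-f_1})=\Omega(m^{f/2})$ copies of $F$. So the only complete bipartite models compatible with $N(F,G)=o(m^{f/2})$ have a side of size less than $f_1$, and then $G[W]$ has $O(1)$ edges. (Also, $\beta'(F)\ge2$ just says that $F$ without isolated vertices is not a star; $P_4$, a star plus a pendant edge, has $\beta'(P_4)=2$.)

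Step~2 has a second problem even where a dense core exists. The weights $y_v=x_v^2$ can concentrate on $O(1)$ heavy vertices, and then $F$-freeness does not bound the weighted Lagrangian by $1-1/r+o(1)$. For example, $K_k\vee I_t$ is $C_{2k+1}$-free, yet $\mathbf y^{\top}A\mathbf y=\tfrac12+\tfrac{k-1}{4k}+o(1)$. So the Motzkin--Straus bound $\lambda^2\le 2m\,\mathbf y^{\top}A\mathbf y$ overshoots $m$ by a constant factor unless the $O(1)$ edges among heavy vertices are split off first; those edges change $\lambda$ by only $O(1)$.

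What is missing is a genuinely sparse, Perron-weighted argument. It must isolate a bounded set of heavy vertices and control the light vertices, which carry most of the mass and, in the unbalanced case, essentially all of the edges. That is the real content of the cited theorem, and your outline presupposes it rather than proving it.
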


We first derive the sequence form of
Lemma~\ref{thm:flz-stability} used below.

\begin{lemma}\label{lem:stability}
Let $G_n$ be a sequence of $m_n$-edge graphs with $m_n\to\infty$,
$$
N(C_{2k+1},G_n)=o(m_n^{k+1/2}),
 \qquad
 \rho(G_n)\ge\sqrt{(1-o(1))m_n}.
$$
Then, after passing to a subsequence, there are disjoint sets
$A_n,B_n\subseteq V(G_n)$ such that
$$
 \dist(G_n,K_{A_n,B_n})=o(m_n).
$$
\end{lemma}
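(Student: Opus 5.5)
We derive the statement from Lemma~\ref{thm:flz-stability}(ii) with $F=C_{2k+1}$, so $f=2k+1$ and $\chi(F)=3$, i.e.\ $r=2$. The hypothesis $N(C_{2k+1},G_n)=o(m_n^{k+1/2})$ is exactly $N(F,G_n)=o(m_n^{f/2})$. The only work is converting the ``for every $\varepsilon$ there is $\delta$'' formulation into a statement about a sequence with $o(1)$ errors, which we do by a diagonal argument.

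First we fix a sequence $\varepsilon_j\downarrow 0$. For each $j$, Lemma~\ref{thm:flz-stability}(ii) gives $\delta_j>0$ and a threshold $M_j$. The threshold has two roles: it makes $m\ge M_j$ ``sufficiently large'', and it absorbs whatever quantitative form of $N(F,G)=o(m^{f/2})$ the lemma needs. We read the lemma's hypothesis as $N(F,G)\le \eta_j m^{f/2}$ for some $\eta_j>0$ depending on $\varepsilon_j$; if it is meant asymptotically, the same extraction applies. Under these hypotheses, every $m$-edge graph $G$ with $m\ge M_j$, $N(C_{2k+1},G)\le\eta_j m^{k+1/2}$ and $\rho(G)\ge\sqrt{(1-\delta_j)m}$ has disjoint $U,V$ with $\dist(G,K_{U,V})\le\varepsilon_j m$. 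Since $N(C_{2k+1},G_n)/m_n^{k+1/2}\to0$, $\rho(G_n)^2/m_n\ge 1-o(1)$ and $m_n\to\infty$, for each $j$ there is $n_j$ such that every $n\ge n_j$ satisfies all three conditions for index $j$. We may choose $n_1<n_2<\cdots$.

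Next, for each $n$ let $j(n)$ be the largest $j$ with $n_j\le n$; for $n<n_1$ we pick $A_n,B_n$ arbitrarily. Then $j(n)\to\infty$, and applying the lemma with index $j(n)$ yields disjoint $A_n,B_n$ with $\dist(G_n,K_{A_n,B_n})\le\varepsilon_{j(n)}m_n=o(m_n)$. So no subsequence is actually needed. It is also harmless to pass to the subsequence $(G_{n_j})_j$ with $A,B$ obtained from index $j$, which gives the statement as phrased.

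The only delicate point is the quantifier structure of Lemma~\ref{thm:flz-stability}. Its hypothesis ``$N(F,G)=o(m^{f/2})$'' for a single graph is informal, and we must interpret it as a uniform quantitative bound tied to $\varepsilon$, as above. That is the standard reading of such stability theorems, and with it the diagonalization is routine.
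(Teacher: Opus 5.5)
Your proposal is correct and follows essentially the paper's argument: you apply Lemma~\ref{thm:flz-stability}(ii) with $F=C_{2k+1}$ (so $r=2$ and $f/2=k+1/2$) for a sequence $\varepsilon_j\downarrow0$, then diagonalize over the resulting $\delta_j$ and size thresholds. Your extra observation is also correct: since the hypotheses hold for all $n\ge n_j$, choosing the index $j(n)$ makes passing to a subsequence unnecessary, a minor strengthening of the paper's version, which simply extracts the subsequence $(G_{n_j})$.
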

\begin{proof}
Apply Lemma~\ref{thm:flz-stability}\textup{(ii)} with
$F=C_{2k+1}$. Here $\chi(F)=3$, so $r=2$, and
$|F|/2=k+1/2$. For each $j\ge1$, take $\varepsilon=1/j$ and let
$\delta_j>0$ be supplied by that lemma. Since
$N(C_{2k+1},G_n)=o(m_n^{k+1/2})$ and
$\rho(G_n)\ge\sqrt{(1-o(1))m_n}$, we may choose
$n_j>n_{j-1}$ sufficiently large so that all the hypotheses of
Lemma~\ref{thm:flz-stability}\textup{(ii)} hold with
$\varepsilon=1/j$ and $\delta=\delta_j$. Hence there are disjoint
sets $A_{n_j},B_{n_j}\subseteq V(G_{n_j})$ satisfying
$$
\dist(G_{n_j},K_{A_{n_j},B_{n_j}})
\le\frac{m_{n_j}}{j}.
$$
Relabelling this subsequence gives the desired conclusion.
\end{proof}

\begin{observation}\label{obs:threshold-optimization}
The following two statements hold.
\begin{enumerate}
\item[(i)] For every $m$ for which $g_k(m)$ is defined,
$
g_k(m)^2-(k-1)g_k(m)=m-\binom{k}{2}.
$
Moreover, for fixed $k$,
$$
g_k(m)=\sqrt m+\frac{k-1}{2}+O_k(m^{-1/2});
$$

\item[(ii)]  For integers $k\ge2$ and $r\ge k+1$,
$$
\frac{\left(\left\lfloor (k-1)r/2\right\rfloor+1\right)
(k-1)!\binom{r-2}{k-1}}{r^k}
\ge
\frac{\lceil k^2/2\rceil (k-1)!}{(k+1)^k}.
$$
\end{enumerate}
\end{observation}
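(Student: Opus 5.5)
For part (i) the plan is to observe that $g_k(m)$ is, by construction, the larger root of a quadratic. Indeed, $x=g_k(m)$ solves
$$x^2-(k-1)x-\Bigl(m-\tbinom{k}{2}\Bigr)=0,$$
since the discriminant of this quadratic is
$$(k-1)^2+4m-2k(k-1)=4m-k^2+1.$$
This gives the identity directly. For the asymptotic formula we expand
$$\sqrt{4m-k^2+1}=2\sqrt m\,\sqrt{1-\tfrac{k^2-1}{4m}}=2\sqrt m+O_k(m^{-1/2}),$$
and then divide by $2$.

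For part (ii), write $f(r)$ for the left-hand side and note that $(k-1)!\binom{r-2}{k-1}=(r-2)(r-3)\cdots(r-k)$. At $r=k+1$ the binomial factor equals $1$. We also have $\lfloor (k^2-1)/2\rfloor+1=\lceil k^2/2\rceil$, which we check separately for $k$ even and $k$ odd. Hence $f(k+1)$ equals the right-hand side exactly, and it remains to treat $r\ge k+2$.

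For $r\ge k+2$ we use $\lfloor (k-1)r/2\rfloor+1\ge ((k-1)r+1)/2$. This gives
$$f(r)\ge h(r):=\frac{(k-1)r+1}{2r}\prod_{j=2}^{k}\Bigl(1-\frac jr\Bigr).$$
Every factor of $h$ is positive and nondecreasing in $r$, so $h(r)\ge h(k+2)$. Since $\prod_{j=2}^k(k+2-j)=k!$, we get
$$h(k+2)=\frac{(k^2+k-1)\,k!}{2(k+2)^k}.$$

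It therefore suffices to compare $h(k+2)$ with the target. We bound the target from above using $\lceil k^2/2\rceil\le (k^2+1)/2$, so it is enough to show
$$\Bigl(\frac{k+1}{k+2}\Bigr)^k\ge \frac{k^2+1}{k(k^2+k-1)}.$$
By Bernoulli's inequality the left side is at least $1-\frac{k}{k+2}=\frac{2}{k+2}$. Clearing denominators, the inequality $\frac{2}{k+2}\ge\frac{k^2+1}{k(k^2+k-1)}$ is equivalent to
$$k^3-3k-2=(k-2)(k+1)^2\ge 0,$$
which holds for all $k\ge2$.

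The only step requiring care is this final comparison between $r=k+2$ and $r=k+1$, since the cruder bounds must still be strong enough there. The Bernoulli estimate above suffices, and in fact has exactly zero slack at $k=2$; one can confirm the $k=2$ case directly by computing $f(4)$ and $f(3)$.
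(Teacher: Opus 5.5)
Part (i) is correct and matches the paper. In part (ii) your overall route is also the paper's: equality at $r=k+1$, reduction of all $r\ge k+2$ to $r=k+2$, then Bernoulli. Your final algebra is correct, including $h(k+2)=\frac{(k^2+k-1)k!}{2(k+2)^k}$ and the factorization $k^3-3k-2=(k-2)(k+1)^2$.

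The gap is the monotonicity step. You claim every factor of $h$ is nondecreasing, but the prefactor
\[
\frac{(k-1)r+1}{2r}=\frac{k-1}{2}+\frac{1}{2r}
\]
is strictly \emph{decreasing} in $r$. So $h(r)\ge h(k+2)$ does not follow as written. The conclusion is still true, and you can repair it by pairing the bad factor with the $j=2$ factor:
\[
\Bigl(\frac{k-1}{2}+\frac{1}{2r}\Bigr)\Bigl(1-\frac{2}{r}\Bigr)=\frac{k-1}{2}-\frac{2k-3}{2r}-\frac{1}{r^2}.
\]
This is positive and increasing for $r\ge k+2$, since $2k-3>0$. The remaining factors $1-j/r$ with $3\le j\le k$ are positive and increasing, so $h$ is increasing on $r\ge k+2$.

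You cannot instead fix it by discarding the $+1$, as the paper does. The paper uses $\lfloor (k-1)r/2\rfloor+1\ge (k-1)r/2$, which makes the prefactor the constant $(k-1)/2$. Combined with your estimate $\lceil k^2/2\rceil\le (k^2+1)/2$, that fails at $k=2$: the relevant ratio
\[
\frac{k(k-1)(k+2)}{k^2+1}\Bigl(\frac{k+1}{k+2}\Bigr)^{k}
\]
equals $9/10<1$ there. The paper avoids this by keeping $\lceil k^2/2\rceil$ exact and checking $k(k-1)\ge\lceil k^2/2\rceil$, which is an equality at $k=2$. Your sharper floor bound $\lfloor x/2\rfloor+1\ge (x+1)/2$ is what lets you use the uniform $(k^2+1)/2$ estimate. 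With the monotonicity repaired as above, your argument is complete.
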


\begin{proof}
For (i), the first identity follows by squaring
$
2g_k(m)-(k-1)=\sqrt{4m-k^2+1}.
$
Also,
$
\sqrt{4m-k^2+1}=2\sqrt m+O_k(m^{-1/2}),
$
which gives the stated expansion.

For (ii), first let $r=k+1$. Then
$
\left\lfloor\frac{(k-1)(k+1)}{2}\right\rfloor+1
=
\left\lceil\frac{k^2}{2}\right\rceil,
$
and
$
(k-1)!\binom{k-1}{k-1}=(k-1)!,
$
so equality holds.
Now let $r\ge k+2$. Since
$
\left\lfloor (k-1)r/2\right\rfloor+1\ge (k-1)r/2,
$
we have
$$
\begin{aligned}
\frac{\left(\left\lfloor (k-1)r/2\right\rfloor+1\right)
(k-1)!\binom{r-2}{k-1}}{r^k}
&\ge
\frac{k-1}{2}
\prod_{j=2}^{k}\Big(1-\frac{j}{r}\Big)
\ge
\frac{(k-1)k!}{2(k+2)^{k-1}}.
\end{aligned}
$$
It remains to show
$$
\frac{(k-1)k!}{2(k+2)^{k-1}}
\ge
\frac{\lceil k^2/2\rceil (k-1)!}{(k+1)^k}.
$$
Equivalently,
$$
\frac{k(k-1)(k+2)}{2\lceil k^2/2\rceil}
\left(\frac{k+1}{k+2}\right)^k
\ge1.
$$
By Bernoulli's inequality,
$
((k+1)/(k+2))^k\ge2/(k+2),
$
and hence the left-hand side is at least
$
\frac{k(k-1)}{\lceil k^2/2\rceil}\ge1.
$
This proves (ii).
\end{proof}

\begin{lemma}[Weyl's inequality~\cite{Horn-Johnson-2013}]
\label{lem:weyl}
Let $M$ and $N$ be real symmetric matrices of order $n$, with eigenvalues
$\lambda_1(M)\ge\cdots\ge\lambda_n(M)$ and
$\lambda_1(N)\ge\cdots\ge\lambda_n(N)$, respectively. Then, for every
$1\le i\le n$,
$$
 |\lambda_i(M)-\lambda_i(N)|
 \le \|M-N\|\le\|M-N\|_F,
$$
where $\|\cdot\|$ and $\|\cdot\|_F$ denote the operator norm and
the Frobenius norm, respectively.
\end{lemma}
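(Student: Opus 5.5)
This is the standard Weyl inequality, so we only need to recall its proof; no earlier result of the paper is used. The plan is to use the Courant--Fischer min-max characterization
$$
\lambda_i(M)=\max_{\dim S=i}\ \min_{x\in S,\ \|x\|=1} x^{T}Mx ,
$$
and to compare the two quadratic forms pointwise. Set $E=M-N$. For every unit vector $x$, the Cauchy--Schwarz inequality together with the definition of the operator norm gives
$$
|x^{T}Mx-x^{T}Nx|=|x^{T}Ex|\le\|Ex\|\le\|E\|.
$$

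Hence $x^{T}Mx\le x^{T}Nx+\|E\|$ for every unit $x$. We take the minimum over unit vectors in a fixed $i$-dimensional subspace $S$, and then the maximum over all such $S$. The min-max formula then yields $\lambda_i(M)\le\lambda_i(N)+\|E\|$. Exchanging the roles of $M$ and $N$ gives the reverse inequality, and together they give $|\lambda_i(M)-\lambda_i(N)|\le\|M-N\|$.

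The second inequality, $\|E\|\le\|E\|_F$, follows from Cauchy--Schwarz applied row by row. For a unit vector $x$,
$$
\|Ex\|^2=\sum_j\Big(\sum_l E_{jl}x_l\Big)^2\le\sum_j\Big(\sum_l E_{jl}^2\Big)\|x\|^2=\|E\|_F^2 .
$$
Alternatively, since $E$ is symmetric, $\|E\|$ is the largest absolute eigenvalue of $E$, while $\|E\|_F^2$ is the sum of the squares of all its eigenvalues.

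There is no genuine obstacle. The only point needing care is to apply the min-max principle with the same index $i$ on both sides, so that the pointwise bound on the quadratic forms passes through both the minimum and the maximum.
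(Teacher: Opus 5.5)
Your proof is correct: bounding $|x^{T}(M-N)x|\le\|M-N\|$ for unit $x$, passing this through the min and max in Courant--Fischer, and using the row-wise Cauchy--Schwarz bound $\|E\|\le\|E\|_F$ is the standard argument. The paper gives no proof of its own and simply cites Horn--Johnson, where Weyl's inequality is derived from Courant--Fischer in essentially this way.
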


\begin{lemma}[de Caen~\cite{deCaen}]\label{lem:decaen}
If $G$ is a graph with $n\ge2$ vertices and $m$ edges, then
$$
 \sum_{u\in V(G)}d_G(u)^2
 \le m\Big(\frac{2m}{n-1}+n-2\Big).
$$
\end{lemma}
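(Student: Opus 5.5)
The statement just above is de Caen's inequality, Lemma~\ref{lem:decaen}. The plan is the standard short argument: bound each term $d(u)^2$ from above by a linear function of $d(u)$ and sums over neighbours, then sum over $u$ and use the handshake identities. For a vertex $u$, every neighbour $v$ of $u$ has $d(v)\le n-1$. Also, any vertex $w\ne u$ has at most $n-2$ neighbours other than $u$.

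The key identity is the double count
$$\sum_{uv\in E(G)}\bigl(d(u)+d(v)\bigr)=\sum_{u}d(u)^2 .$$
For each edge $uv$, look at the remaining vertices $w\notin\{u,v\}$. Each such $w$ is adjacent to at most both of $u$ and $v$. The endpoints $u,v$ themselves contribute exactly $2$ to $d(u)+d(v)$ through the edge $uv$. Hence
$$d(u)+d(v)\le 2+|N(u)\cap N(v)|+\bigl(n-2\bigr),$$
since $|N(u)\setminus\{v\}|+|N(v)\setminus\{u\}| = |N(u)\cup N(v)\setminus\{u,v\}|+|N(u)\cap N(v)|$ and the union has size at most $n-2$. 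Equivalently, $d(u)+d(v)\le n+|N(u)\cap N(v)|$. Summing this directly gives triangle counts, which is not the desired form, so I would instead use the complementary bound. The number of vertices $w\ne u,v$ adjacent to neither $u$ nor $v$ is at least $0$, and this yields $d(u)+d(v)-2\le n-2+(\text{common neighbours})$.

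The cleaner route, which is de Caen's own, is this. For each vertex $w$ and each edge $uv$ with $w\notin\{u,v\}$, let $\varepsilon_{w,uv}$ be the number of endpoints of $uv$ adjacent to $w$, so $\varepsilon_{w,uv}\in\{0,1,2\}$. Summing over the $m$ edges gives
$$\sum_{uv}\bigl(d(u)+d(v)-2\bigr)=\sum_{uv}\sum_{w\ne u,v}\varepsilon_{w,uv}.$$
For fixed $w$, the number of edges $uv$ with $\varepsilon_{w,uv}\ge1$ is at most $m$. Now I use the inequality $\sum_w d(w)^2\le\sum_{uv}(d(u)+d(v))$, which holds with equality, combined with Cauchy--Schwarz on $\sum_w\varepsilon$. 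Let $S=\sum d^2$. Then
$$S-2m=\sum_w\#\{\text{edges with exactly one endpoint adjacent to } w\}+2\,\#\{\text{edges inside }N(w)\}.$$
Edges inside $N(w)$ number at most $m$, and those with exactly one endpoint in $N(w)$ are at most $d(w)(n-1-d(w))$, up to the edges at $w$. So
$$S-2m\le\sum_w\bigl[d(w)(n-1-d(w))+\dots\bigr],$$
and summing gives a relation of the form $S\le 2m(n-1)-S+\dots$. Here the main obstacle is getting the exact constants right. The right bookkeeping is: for each $w$, $\sum_{v\in N(w)}d(v)\le d(w)(d(w)-1)+\,$(edges from $N(w)$ outward)$\,+\,d(w)$, with the outward edges at most $m-$(edges inside $N(w)$). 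Summing over $w$ should then collapse to $S(n-1)\le 2m^2+m(n-1)(n-2)$, which is the claim. I would check this final algebra carefully against $K_n$, where equality holds.
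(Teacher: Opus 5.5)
The paper gives no proof of this lemma; it is quoted from de Caen, so your argument has to stand on its own. As written it does not. None of your routes reaches the inequality, and the closing claim that the bookkeeping ``should collapse'' to $(n-1)S\le 2m^2+m(n-1)(n-2)$ is false.

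Write $T$ for the number of triangles and $\mathrm{out}(w)$ for the number of edges from $N(w)$ to $V\setminus(N(w)\cup\{w\})$. Every local bound you use reduces to $S\le mn+3T$:
\begin{itemize}
\item Your first route gives this bound directly.
\item In the $\varepsilon$-decomposition, bounding the one-endpoint term by $d(w)(n-1-d(w))$ gives $2S\le 2m(n-1)+2m+6T$, which is the same bound.
\item In your last paragraph, the correct bookkeeping is $\sum_{v\in N(w)}d(v)=d(w)+2e(N(w))+\mathrm{out}(w)$ with $\mathrm{out}(w)\le m-d(w)-e(N(w))$. This gives $\sum_{v\in N(w)}d(v)\le m+e(N(w))$, which again sums to $S\le mn+3T$. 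Your version replaces $2e(N(w))$ by $d(w)(d(w)-1)$ while also subtracting $e(N(w))$; it sums only to $3T\le nm$ and says nothing about $S$.
\end{itemize}
Removing $T$ locally via $e(N(w))\le\binom{d(w)}{2}$ then yields only $S\le 2m(n-1)$, that is, $\sum_w d(w)(n-1-d(w))\ge 0$. But de Caen's inequality is equivalent to
\[
\sum_{v}d(v)\bigl(n-1-d(v)\bigr)\ \ge\ \frac{2m\bar m}{n-1},\qquad \bar m:=\binom{n}{2}-m .
\]
So what is missing is a positive global lower bound on mixed cherries: an edge of $G$ and an edge of $\overline G$ at a common vertex. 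Per-vertex upper bounds on $e(N(w))$ or on outgoing edges do not produce such a bound. Checking equality at $K_n$ cannot detect this loss, because there $\bar m=0$ and even the trivial bound $S\le 2m(n-1)$ is tight.

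The missing idea is a global double count, as follows.
\begin{itemize}
\item Consider the $m\bar m$ pairs $(f,g)$ with $f\in E(G)$ and $g\in E(\overline G)$. Exactly $I=\sum_v d(v)(n-1-d(v))$ of them intersect; let $D$ be the number of disjoint ones.
\item For a disjoint pair $f=ab$, $g=cd$, each of the four cross pairs $pq$ (with $p\in f$, $q\in g$) produces an intersecting pair: $(pq,g)$ if $pq\in E(G)$, and $(f,pq)$ otherwise.
\item A given intersecting pair $(xy,yz)$ is produced at most $2(n-3)$ times. Its preimage is $(xw,yz)$ or $(xy,zw)$ for some $w\notin\{x,y,z\}$.
\item Hence $4D\le 2(n-3)I$, so $m\bar m=I+D\le\frac{n-1}{2}I$, which rearranges to the lemma.
\end{itemize}
Alternatively, for $n\ge3$ let $B$ be the adjacency matrix of the line graph of $K_n$, with eigenvalues $2(n-2)$, $n-4$, $-2$, and let $x$ be the indicator vector of $E(G)$. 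Then $S-2m=x^{\top}Bx\le\frac{2m^2}{n-1}+(n-4)m$, which is the lemma.
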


We conclude this section by showing that the constant in
Theorem~\ref{thm:main} is asymptotically best possible.

\begin{proposition}\label{prop:construction}
For every fixed $k\ge2$ and every sufficiently large integer $m$, there
is an $m$-edge graph $G$ satisfying $\rho(G)>g_k(m)$ and
$$
N(C_{2k+1},G)=
\left(
\frac{\lceil k^2/2\rceil (k-1)!}{(k+1)^k}
+o(1)
\right)m^k.
$$
\end{proposition}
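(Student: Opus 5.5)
The construction is a complete bipartite graph with one small side of $k+1$ vertices, plus slightly more than a $(k-1)$-regular graph's worth of edges inside that small side. We verify the spectral condition with a two-class Rayleigh quotient and count cycles directly. The extremal case $r=k+1$ of Observation~\ref{obs:threshold-optimization}(ii) suggests this shape: the factors $\lfloor (k-1)r/2\rfloor+1$ and $(k-1)!\binom{r-2}{k-1}$ should be the number of core edges and the number of ways to route the remaining core vertices of a cycle.

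\emph{The construction.} Set $r=k+1$ and $e_0:=\lfloor (k-1)(k+1)/2\rfloor+1=\lceil k^2/2\rceil$. Since $e_0\le\binom{k+1}{2}$, we may fix a graph $H$ on a vertex set $S$ with $|S|=k+1$ and $e(H)=e_0$. Take an independent set $T$ with $|T|=t$, join every vertex of $S$ to every vertex of $T$, and put $H$ inside $S$. Write $m=(k+1)t+e_0+p$ with $0\le p\le k$. To reach exactly $m$ edges, attach $p$ pendant edges at one vertex of $S$, each going to a new leaf. The leaves lie on no cycle, so they do not affect $N(C_{2k+1},G)$, and adding edges can only increase $\rho$. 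Thus $t=m/(k+1)+O_k(1)$.

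\emph{Spectral radius.} Let $x=\mathbf 1_S$ and $y=\mathbf 1_T$. For $\alpha,\beta\ge 0$, the Rayleigh quotient of $\alpha x+\beta y$ in $A(G)$, restricted to the two classes, is governed by the quotient matrix $\begin{pmatrix} 2e_0/r & t\\ r & 0\end{pmatrix}$. Hence $\rho(G)\ge\lambda$, where $\lambda$ is the largest root of $\lambda^2-(2e_0/r)\lambda-rt=0$. Using $rt=m-e_0-p$, this gives
$$
\lambda^2-(k-1)\lambda=m-e_0-p+\Big(\frac{2e_0}{k+1}-(k-1)\Big)\lambda .
$$
The choice of $e_0$ gives $2e_0/(k+1)>k-1$, so the last term is at least $c_k\sqrt m$ for some $c_k>0$. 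Therefore $\lambda^2-(k-1)\lambda>m-\binom k2$ for $m$ large. By Observation~\ref{obs:threshold-optimization}(i) the right-hand side equals $g_k(m)^2-(k-1)g_k(m)$. The map $z\mapsto z^2-(k-1)z$ is increasing for $z>(k-1)/2$, so $\rho(G)\ge\lambda>g_k(m)$. This is the one delicate point. A $(k-1)$-regular core gives $\rho^2-(k-1)\rho=m-(k^2-1)/2$, which falls just short of the threshold. One extra core edge changes the linear coefficient, and that change produces a gain of order $\sqrt m$ rather than $O(1)$.

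\emph{Counting.} First consider copies of $C_{2k+1}$ using exactly one edge $uv$ of $H$. Such a cycle must be $v\,x_1\,s_1\,x_2\cdots s_{k-1}\,x_k\,u$ with distinct $x_i\in T$ and distinct $s_i\in S\setminus\{u,v\}$. Fixing the unordered edge $uv$ and reading the cycle from $v$ counts each copy exactly once. This gives
$$
e_0\,(k-1)!\binom{r-2}{k-1}\,t(t-1)\cdots(t-k+1)
=\big(\lceil k^2/2\rceil(k-1)!+o(1)\big)\Big(\frac{m}{k+1}\Big)^k
$$
copies. Next, an odd cycle must use an odd number of edges inside $S$, since every edge outside $S$ goes between $S$ and $T$. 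If a cycle uses $j\ge 3$ edges of $H$, it visits at most $k-1$ vertices of $T$. There are $O_k(1)$ choices for its trace on $S$, so there are $O_k(t^{k-1})=o(m^k)$ such cycles. Summing the two contributions yields $N(C_{2k+1},G)=\big(\lceil k^2/2\rceil(k-1)!/(k+1)^k+o(1)\big)m^k$, as claimed.
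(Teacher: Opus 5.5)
Your proposal is correct and follows essentially the same route as the paper: the same core graph $H\vee I_t$ with $H$ on $k+1$ vertices and $\lceil k^2/2\rceil$ edges, the same two-vector Rayleigh-quotient lower bound, and the same exact count of the copies through the unique core edge. Your unnormalized quotient matrix has the same characteristic polynomial as the paper's symmetric $Q$. The remaining differences are cosmetic: you pad with pendant edges rather than disjoint copies of $K_2$, and you compare $\lambda^2-(k-1)\lambda$ with $m-\binom{k}{2}$ directly instead of expanding $L_t$ and $g_k(m)$ up to $O(t^{-1/2})$ error.
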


\begin{proof}
	For every sufficiently large $m$, write
	$m=(k+1)t+\lceil k^2/2\rceil+r$, where $0\le r<k+1$.
	Take any graph $H$ on $k+1$ vertices with
	$\lceil k^2/2\rceil$ edges, and let
	$
	G_m=(H\vee I_t)\cup rK_2,
	$
	where the $r$ copies of $K_2$ are disjoint from $H\vee I_t$. Then
	$e(G_m)=m$ and $\rho(G_m)=\rho(H\vee I_t)$. 
Let $X_t=H\vee I_t$. Define $\mathbf{x},\mathbf{y}\in
\mathbb R^{V(X_t)}$ by
$$
\mathbf{x}_v=
\begin{cases}
(k+1)^{-1/2},& v\in V(H),\\
0,& v\in V(I_t),
\end{cases}
\qquad
\mathbf{y}_v=
\begin{cases}
0,& v\in V(H),\\
t^{-1/2},& v\in V(I_t).
\end{cases}
$$
Then $\mathbf{x}$ and $\mathbf{y}$ are orthonormal. Let
	$$
	Q:=
	\begin{pmatrix}
		\mathbf{x}^{\top}A(X_t)\mathbf{x}&
		\mathbf{x}^{\top}A(X_t)\mathbf{y}\\
		\mathbf{y}^{\top}A(X_t)\mathbf{x}&
		\mathbf{y}^{\top}A(X_t)\mathbf{y}
	\end{pmatrix}
	=\begin{pmatrix}
		2\lceil k^2/2\rceil/(k+1)&\sqrt{(k+1)t}\\
		\sqrt{(k+1)t}&0
	\end{pmatrix}.
	$$
	For any $\alpha,\beta\in\mathbb R$ with
	$\mathbf z:=\alpha\mathbf{x}+\beta\mathbf{y}\ne\mathbf0$, orthonormality gives
	$$
	\frac{\mathbf z^{\top}A(X_t)\mathbf z}{\mathbf z^{\top}\mathbf z}
	=\frac{(\alpha,\beta)Q(\alpha,\beta)^{\top}}
	{\alpha^2+\beta^2}.
	$$
	The maximum of the right-hand side is $\lambda_{\max}(Q)$, whereas
	$\rho(X_t)$ is the maximum Rayleigh quotient over all nonzero vectors.
	Thus
	$
	\rho(G_m)=\rho(X_t)\ge
	\lambda_{\max}(Q)
	$
	and
	$\lambda_{\max}(Q)=
	(
	2\lceil k^2/2\rceil/(k+1)
	+
	\sqrt{
		\left(2\lceil k^2/2\rceil/(k+1)\right)^2
		+4(k+1)t
	})/2.
	$
	Let
	$$
	L_t:=
	\frac{
		2\lceil k^2/2\rceil/(k+1)
		+
		\sqrt{
			\left(2\lceil k^2/2\rceil/(k+1)\right)^2
			+4(k+1)t
		}
	}{2}.
	$$
	Then $\rho(G_m)\ge L_t$, and
	$$
	L_t=
	\sqrt{(k+1)t}
	+\frac{\lceil k^2/2\rceil}{k+1}
	+O_k(t^{-1/2}),
	$$
	whereas Observation~\ref{obs:threshold-optimization}(i) gives
	$$
	g_k(m)=\sqrt{(k+1)t}+(k-1)/2+O_k(t^{-1/2}).
	$$ Since
	$\lceil k^2/2\rceil/(k+1)>(k-1)/2$, we have
	$L_t>g_k(m)$ for all sufficiently large $t$. Hence
	$\rho(G_m)>g_k(m)$.
	
	The $r$ components isomorphic to $K_2$ contain no cycles.  Since $I_t$
	is independent, a copy of $C_{2k+1}$ in $X_t$ contains at most $k$
	vertices of $I_t$.  If it contains exactly $k$, then its other $k+1$
	vertices are all the vertices of $H$, and exactly one of its edges lies
	in $H$.  Orient each edge of $H$ once.  Choosing this edge, choosing and
	ordering the $k$ vertices of $I_t$, and ordering the remaining $k-1$
	vertices of $H$ gives $\lceil k^2/2\rceil k!(k-1)!\binom tk$ copies,
	each counted once because its unique edge in $H$ determines the
	chosen edge and its fixed orientation determines the ordering.  Copies
	using at most $k-1$ vertices of $I_t$ contribute $O_k(t^{k-1})$: their
	vertices in $I_t$ have $O_k(t^{k-1})$ choices, while $|V(H)|=k+1$ is fixed,
	so the choices and orderings of vertices in $H$ contribute only $O_k(1)$.
	Hence
	$$
	\begin{aligned}
		N(C_{2k+1},G_m)
		=
		\lceil k^2/2\rceil
		\,k!(k-1)!\binom{t}{k}
		+O_k(t^{k-1})
		=
		\left(
		\frac{\lceil k^2/2\rceil (k-1)!}{(k+1)^k}
		+o(1)
		\right)m^k.
	\end{aligned}
	$$
\end{proof}

\begin{remark}
	If $k$ is odd and $H=K_{k+1}$ minus a perfect matching, then $H$ is
	$(k-1)$-regular and has $(k^2-1)/2$ edges. In this case
	$$
	\rho(H\vee I_t)
	=\frac{k-1+\sqrt{(k-1)^2+4(k+1)t}}2
	<g_k\left((k+1)t+\frac{k^2-1}{2}\right).
	$$
	Thus the graph with one fewer core edge has adjacency spectral radius
	strictly below $g_k\bigl(e(H\vee I_t)\bigr)$. The
	construction in Proposition~\ref{prop:construction} uses
	$(k^2+1)/2$ edges in $H$, explaining the ceiling in the constant
	$\lceil k^2/2\rceil (k-1)!/(k+1)^k$.
\end{remark}

\section{The two bipartite cases}

By the stability result, it remains to consider graphs that are close
to a complete bipartite graph $K_{A_n,B_n}$. We divide the proof into
two cases according to the sizes of the two bipartition classes. If
one class has bounded size, we use a resolvent argument. If both
$|A_n|$ and $|B_n|$ tend to infinity, we use the odd spectral moment
together with the estimate for non-injective closed walks. These two
cases complete the proof after the stability reduction.


We now treat the case where $G$ is close to a complete bipartite graph
with one bipartition class of bounded size. The main difficulty is that
the first-order comparison at the threshold $\theta=g_k(m)$ may have equality
cases. To handle them, we isolate the bounded side and use a resolvent
expansion to obtain a finer spectral estimate.

\begin{proposition}\label{prop:bounded}
Fix $R\ge1$. Let $G_n$ be a sequence of graphs with
$m_n=e(G_n)\to\infty$ and $\rho(G_n)>g_k(m_n)$. Suppose that, after
deleting isolated vertices, there are disjoint sets $S_n,D_n$ such that
$1\le |S_n|\le R$ and
$\dist(G_n,K_{S_n,D_n})=o(m_n)$. Then
$$
N(C_{2k+1},G_n)\ge
\left(
\frac{\lceil k^2/2\rceil (k-1)!}{(k+1)^k}-o(1)
\right)m_n^k.
$$
\end{proposition}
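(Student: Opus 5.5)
The plan is to argue along a subsequence and to show that the spectral condition forces a suitable bounded ``core'' inside $S_n$. Once the core is found, the cycle count reduces to a direct computation, which is then compared with Observation~\ref{obs:threshold-optimization}(ii). Write $s=|S_n|$, $D=D_n$ and $m=m_n$. Passing to a subsequence, I may assume $s=r$ is constant and that $G_n[S_n]$ is a fixed graph $H$ with $h$ edges. Since $\dist(G_n,K_{S_n,D_n})=o(m)$ and $e(K_{S_n,D_n})=r|D|$, we have $|D|=(1+o(1))m/r$. All but $o(|D|)$ vertices of $D$ are then adjacent to every vertex of $S_n$; call these vertices \emph{typical} and let $D'$ be the set of them. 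Counting cycles that use $k+1$ vertices of $S_n$ (forming a path system closed by exactly one edge of $H$) and $k$ typical vertices gives, exactly as in Proposition~\ref{prop:construction},
$$
N(C_{2k+1},G_n)\ge \bigl(1-o(1)\bigr)\,h\,(k-1)!\binom{r-2}{k-1}k!\binom{|D'|}{k}
=\Bigl(\frac{h(k-1)!\binom{r-2}{k-1}}{r^k}-o(1)\Bigr)m^k .
$$
By Observation~\ref{obs:threshold-optimization}(ii), it therefore suffices to prove two facts: that $r\ge k+1$, and that $h\ge \lfloor (k-1)r/2\rfloor+1$. The latter is equivalent to $2h>(k-1)r$. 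Both facts should follow from $\rho(G_n)>g_k(m)$, unless $G_n$ already contains $\Omega(m^k)$ copies of $C_{2k+1}$ from some other source. In that alternative case we may assume, by contradiction, that $N(C_{2k+1},G_n)\le c\,m^k$ with $c$ below the target constant.

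The core spectral step is a Schur-complement (resolvent) computation at $\theta=\rho(G_n)$. Split $A(G_n)$ into the blocks $A_S=A(H)$, $A_D$ and the $S$--$D$ incidence block $B$. If $\theta$ is not an eigenvalue of $A_D$ (I will arrange $\|A_D\|=o(\sqrt m)$; see below), then $\theta$ is the largest root of
$$
\det\bigl(\theta I-A(H)-B(\theta I-A_D)^{-1}B^{\top}\bigr)=0 .
$$
I expand $(\theta I-A_D)^{-1}=\theta^{-1}I+\theta^{-2}A_D+\cdots$. The leading term of $B(\theta I-A_D)^{-1}B^{\top}$ is $\theta^{-1}BB^{\top}$, and $BB^{\top}=|D'|J+E$ with a small error $E$. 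The correction involving $A_D$ counts walks $S\to D\to D\to S$. For $\theta\sim\sqrt m$ this yields
$$
\theta^2-\theta\,\lambda_1\Bigl(A(H)+\tfrac{1}{\theta}(\text{correction})\Bigr)\approx \lambda_1(BB^{\top}).
$$
Using the Perron vector of $A(H)+\theta^{-1}|D'|J$, which is asymptotically uniform on $S$, this gives
$$
\rho(G_n)=\sqrt m+\frac{h}{r}+O(\text{perturbation})+o(1).
$$
Comparing with $g_k(m)=\sqrt m+\tfrac{k-1}{2}+o(1)$ shows that, apart from the perturbations, we need $2h\ge (k-1)r$. In particular $h\le\binom r2$ forces $r\ge k$. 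The boundary cases are then treated separately. The case $r=k$ requires $H=K_k$, and there the next-order term of the expansion must be compared with the $\binom k2$ in Observation~\ref{obs:threshold-optimization}(i). The case $2h=(k-1)r$ means $H$ is $(k-1)$-regular. In both cases I keep the second-order terms of the Schur expansion, namely $m$ versus $r|D|+h$ together with the $O(m^{-1/2})$ terms. The aim is to show $\rho\le g_k(m)$ unless extra structure is present, matching the Remark after Proposition~\ref{prop:construction}. The case $H=K_k$ with $r=k$ and no extra edges is precisely the extremal graph, so it is excluded by strictness.

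The main obstacle is controlling the perturbations, namely the edges inside $D$, the missing $S$--$D$ edges and the atypical vertices, at the $o(1)$ (and in the equality cases $O(m^{-1/2})$) precision in $\rho$. For edges inside $D$ I first use the cycle count. An edge $uv$ of $G_n[D']$ lies in $\Omega(m^{k-1})$ copies of $C_{2k+1}$ through $S_n$ provided $r\ge k$, and each copy is counted boundedly often. Hence $e(G_n[D'])=O(m)$ costs only $O(m^k)$ cycles, and only $o(m)$ such edges can be present without exceeding $c\,m^k$. I would then try to show that $o(m)$ inner edges, together with degrees bounded via Lemma~\ref{lem:decaen}, give $\|A_D\|=o(\sqrt m)$. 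This in turn makes their contribution to $\rho$ equal to $o(1)$, via Lemma~\ref{lem:weyl} applied to the Schur-complement matrix. The same reasoning applies when edges attached to atypical vertices are small. However, a single high-degree vertex of $D$, or a vertex outside $S_n$ adjacent to much of $D$, could shift $\rho$ by $\Theta(1)$. Such a vertex must be absorbed into $S_n$, by enlarging $S_n$ to all vertices of degree $\ge\eta\sqrt m$, which remains bounded by $R'$. Redoing this absorption with $R'$ in place of $R$ is the step I expect to be the most delicate. The computation in the $(k-1)$-regular equality case, where the constant-order terms cancel, is the other place where the argument could fail.
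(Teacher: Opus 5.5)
\textbf{Verdict: genuine gap.} Your architecture matches the paper's. You use a Schur-complement expansion at $\theta=g_k(m_n)$ and split according to the sign of $2h-(k-1)r$. When $2h>(k-1)r$ you use the direct count $h(k-1)!\binom{r-2}{k-1}k!\binom{|D'|}{k}$ together with Observation~\ref{obs:threshold-optimization}(ii). The exact graph $K_k\vee I$ is excluded by strictness. But the step you flag yourself, controlling the perturbations, is a genuine gap, and your proposed remedies do not close it.

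\emph{Absolute bounds cannot work.} Measured against $g_k(m_n)$, the perturbations are not small in $\rho$. Take $G=K_k\vee(pK_2\cup I_{t-2p})$. Then $\rho^2-(k-1)\rho=kt+2pk/(\rho-1)$, while $g_k^2-(k-1)g_k=kt+p$. With $p=m^{3/4}$, which is still only $o(m)$ edges off $K_{S,D}$, $\rho$ lies $\Theta(m^{1/4})$ below $g_k(m)$. So the ``perturbation'' term in your expansion $\rho=\sqrt m+h/r+O(\text{perturbation})$ can be unbounded. No absolute bound at precision $o(1)$ or $O(m^{-1/2})$ is available. What is needed is a \emph{signed} estimate: the first-order effect of every deviation pushes $\rho$ down, and it dominates the possibly upward higher-order effects.

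\emph{The cycle-count step does not help.} It yields only $e(G_n[D'])=O(m)$, not $o(m)$. That is weaker than what the hypothesis already gives, namely $e(G_n-S_n)\le\dist(G_n,K_{S_n,D_n})=o(m)$. This bound also gives $\|A_D\|\le\sqrt{2e(G_n-S_n)}=o(\sqrt m)$ at once, with no need for de Caen. Moreover, the cycle-count tool is unavailable when $r<k$. That is exactly where $2h<(k-1)r$ must be ruled out by spectral means alone.

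\emph{Absorption fails.} The vertices outside $S_n$ of degree at least $\eta\sqrt m$ can number up to $O(\dist/(\eta\sqrt m))$, which can tend to infinity; for example, take $m^{1/4}$ disjoint stars with $\sqrt m$ leaves inside $D$. Also, putting into $S_n$ a vertex adjacent to only a vanishing fraction of $D_n$ adds about $m/r$ to the distance, which destroys the hypothesis.

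\emph{The boundary case is mis-stated.} $2h=(k-1)r$ only says that $G_n[S_n]$ has average degree $k-1$, not that it is regular. The paper needs de Caen's inequality exactly here, to get a second-order coefficient of at least $(k-1)r(r-k)/(2(r-1))$.

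The paper supplies the missing idea in two steps.

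First, it takes $D_0=\{z\notin S_n:N(z)=S_n\}$ and puts everything else, including all atypical vertices and edges, into $H=G_n-D_0$. Then $e(H)=o(m_n)$, and the off-diagonal block is the rank-one matrix $\mathbf c\mathbf 1^{\top}$. So $\rho(G_n)$ solves the scalar equation $\Phi(x)=1$, where $\Phi(x)=\frac{|D_0|}{x}\mathbf c^{\top}(xI-A_H)^{-1}\mathbf c$ is strictly decreasing. It therefore suffices to prove $\Phi(\theta)<1$.

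Second, all deviations are measured by $\operatorname{def}(S_n)=\sum_{v\in V(H)\setminus S_n}d_{S_n}(v)(r-d_{S_n}(v))+r\,e(H-S_n)$. Using $r|D_0|=\theta^2-(k-1)\theta+\binom k2-e(H)$, this quantity enters the $\theta^{-2}$ coefficient of $1-\Phi_{\le2}(\theta)$ with a positive sign. Meanwhile $e(H)\le R^2+4\operatorname{def}(S_n)$ bounds the sign-indefinite tail by $C(e(H)/|D_0|)^{3/2}$. That tail is always dominated by the positive leading term of $1-\Phi_{\le2}(\theta)$. This gives $\rho(G_n)<\theta$ whenever $2s\le(k-1)r$, except for $K_k\vee I_{|D_0|}$ where equality holds. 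It uses no bound on $N(C_{2k+1},G_n)$.

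Without such a defect-versus-tail comparison, your expansion does not determine the sign of $\rho-g_k(m)$.
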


\begin{proof}
Suppose not. Then there are $\varepsilon_0>0$ and a subsequence such that
$$
N(C_{2k+1},G_n)\le
\left(
\frac{\lceil k^2/2\rceil (k-1)!}{(k+1)^k}
-\varepsilon_0
\right)m_n^k.
$$
Passing to a subsequence, we may also assume that
$r:=|S_n|$ and $s:=e(G_n[S_n])$ are fixed.
Let
$D_0:=\{z\in V(G_n)\setminus S_n:N_{G_n}(z)=S_n\}$ and
$H:=G_n-D_0$. Every vertex of $D_n\setminus D_0$ is incident with an
edge of $E(G_n)\triangle E(K_{S_n,D_n})$, and every vertex of
$D_0\setminus D_n$ contributes all its $r$ incident edges to this
symmetric difference. Consequently,
$|D_n\setminus D_0|\le2\dist(G_n,K_{S_n,D_n})$ and
$|D_0\setminus D_n|\le\dist(G_n,K_{S_n,D_n})/r$. Also,
$|m_n-r|D_n||\le\dist(G_n,K_{S_n,D_n})$.

Since $\dist(G_n,K_{S_n,D_n})=o(m_n)$ and $r$ is fixed, these
inequalities give $|D_n\setminus D_0|=o(m_n)$ and
$r|D_0|=(1-o(1))m_n$. The set $D_0$ is independent and complete to
$S_n$. Therefore
$m_n=r|D_0|+e(H)$ and $e(H)=o(m_n)$.
Moreover, by the definition of $D_0$,
$N_H(v)\ne S_n$ for $v\in V(H)\setminus S_n$.

Define the nonnegative defect of $S_n$ by
$$
\operatorname{def}(S_n):=
\sum_{v\in V(H)\setminus S_n}
d_{S_n}(v)\bigl(r-d_{S_n}(v)\bigr)
+r e(H-S_n).
$$
Since
$e(H)-s=\sum_{v\in V(H)\setminus S_n}d_{S_n}(v)+e(H-S_n)$,
we have
$\operatorname{def}(S_n)\le r(e(H)-s)$ and
$e(H)-s\le\left(3+\frac1r\right)\operatorname{def}(S_n)$.
Indeed, the vertices satisfying
$1\le d_{S_n}(v)\le r-1$ contribute at most
$\operatorname{def}(S_n)$ edges to $S_n$. By the definition of $D_0$, every vertex satisfying
$d_{S_n}(v)=r$ is incident with an edge of $H-S_n$, so these vertices
contribute at most $2\operatorname{def}(S_n)$ edges to $S_n$.
Finally, $e(H-S_n)\le\operatorname{def}(S_n)/r$. Adding these three
bounds proves the second defect inequality.

Since $s\le \binom{r}{2}\le R^2$ and $3+1/r\le 4$, the inequality
$e(H)-s\le (3+1/r)\operatorname{def}(S_n)$ implies that
$e(H)\le R^2+4\operatorname{def}(S_n)$.
Moreover, since $m_n=r|D_0|+e(H)$ and
$r|D_0|=(1-o(1))m_n$, we have
$e(H)=m_n-r|D_0|=o(m_n)=o(|D_0|)$, and hence
$e(H)/|D_0|=o(1)$.
Using the inequality
$\operatorname{def}(S_n)\le r(e(H)-s)\le r e(H)$,
we further obtain
$\operatorname{def}(S_n)/|D_0|=o(1)$. Thus
$e(H)\le R^2+4\operatorname{def}(S_n)$,
$e(H)/|D_0|=o(1)$, and
$\operatorname{def}(S_n)/|D_0|=o(1)$.

Let $A_H=A(H)$, and let $\mathbf{c}\in\mathbb R^{V(H)}$ be the characteristic column vector of $S_n$.  Thus $A_H$ has rows and columns indexed by
$V(H)$, and $\mathbf c_u=1$ if $u\in S_n$ and $\mathbf c_u=0$ otherwise.
We have $\mathbf c^{\top}\mathbf c=r$ and
$\mathbf c^{\top}A_H\mathbf c=2s$.
Moreover, we have
$(A_H\mathbf c)_v=d_{S_n}(v)$ for $v\in V(H)$.
Therefore,
$$
\mathbf c^{\top}A_H^2\mathbf c
=\sum_{v\in V(H)}d_{S_n}(v)^2
=\sum_{u\in S_n}d_{S_n}(u)^2+\sum_{v\in V(H)\setminus S_n}d_{S_n}(v)^2.
$$
By the definition of $\operatorname{def}(S_n)$, we get
$$
\begin{aligned}
r(e(H)-s)-\operatorname{def}(S_n)
=
r\!\!\!\sum_{v\in V(H)\setminus S_n}\!\!\!d_{S_n}(v)
\!\!-\!\!\sum_{v\in V(H)\setminus S_n}\!\!\!
d_{S_n}(v)(r-d_{S_n}(v))
=
\!\!\!\sum_{v\in V(H)\setminus S_n}\!\!\!d_{S_n}(v)^2.
\end{aligned}
$$
Hence,
	\begin{equation*}\label{eq:nu-two-general}
		\nu_2:=\mathbf{c}^{\top}A_H^2\mathbf{c}
		=\sum_{u\in S_n}d_{S_n}(u)^2+r(e(H)-s)-\operatorname{def}(S_n).
	\end{equation*}

Let $\theta=g_k(m_n)$.
By
Observation~\ref{obs:threshold-optimization}~(i),
$\theta^2-(k-1)\theta=m_n-\binom{k}{2}$.
The trace-square identity gives
$\rho(H)^2\le2e(H)=o(m_n)$, whereas
$\theta^2=(1+o(1))m_n$ and $K_{r,|D_0|}\subseteq G_n$.
Consequently, for all sufficiently large $n$,
$\theta>\rho(H)$ and $\rho(G_n)>\rho(H)$.

Order the vertices with those in $V(H)$ first and those in $D_0$
second. Then
$$
A(G_n)=
\begin{pmatrix}
A_H&\mathbf c\mathbf 1_{|D_0|}^{\top}\\
\mathbf 1_{|D_0|}\mathbf c^{\top}
&0_{|D_0|\times |D_0|}
\end{pmatrix},
$$
where $\mathbf 1_{|D_0|}$ is the all-ones column vector of length
$|D_0|$.

For $x>\rho(H)$, define
$$
\Phi(x):=
\frac{|D_0|}{x}
\mathbf c^{\top}(xI-A_H)^{-1}\mathbf c.
$$
Taking the Schur complement of $xI-A_H$ gives
$$
\det(xI-A(G_n))
=
\det(xI-A_H)x^{|D_0|-1}
\left(
x-|D_0|\mathbf c^{\top}(xI-A_H)^{-1}\mathbf c
\right).
$$
Since $\rho(G_n)>\rho(H)$, the first determinant does not vanish at
$x=\rho(G_n)$. Hence the final factor must vanish, and therefore
$\Phi(\rho(G_n))=1$.

By Neumann expansion,
$
\Phi(x)=
|D_0|\sum_{j\ge0}
\frac{\mathbf c^{\top}A_H^j\mathbf c}{x^{j+2}}.
$
All coefficients are nonnegative, so $\Phi$ is strictly decreasing.
Retain the first three terms at $x=\theta$:
$$
\Phi_{\le2}(\theta):=
|D_0|
\left(
\frac r{\theta^2}
+\frac{2s}{\theta^3}
+\frac{\nu_2}{\theta^4}
\right).
$$
Using $m_n=r|D_0|+e(H)$ and
$\theta^2-(k-1)\theta=m_n-\binom{k}{2}$, we have
$r|D_0|=\theta^2-(k-1)\theta+\binom{k}{2}-e(H)$.
Substituting this identity and the formula for $\nu_2$ above into
$1-\Phi_{\le2}(\theta)$ gives
\begin{align}\label{eq:general-resolvent-deficit}
1-\Phi_{\le2}(\theta)
={}&
\frac{(k-1)r-2s}{r\theta}\notag\\
&+
\frac{
\operatorname{def}(S_n)
+s(r+2(k-1))
-r\binom{k}{2}
-\sum_{u\in S_n}d_{H[S_n]}(u)^2
}{r\theta^2}\notag\\
&+
\frac{
(k-1)\nu_2
+2s\left(e(H)-\binom{k}{2}\right)
}{r\theta^3}
+
\frac{
\left(e(H)-\binom{k}{2}\right)\nu_2
}{r\theta^4}.
\end{align}

We next estimate the omitted tail. Since
$\|A_H\|\le\sqrt{2e(H)}$, $e(H)=o(m_n)$ and $\theta^2=(1+o(1))m_n$, we have $\|A_H\|/\theta=o(1)$. Notice that $m_n$ is sufficiently large, so we may assume
$\|A_H\|/\theta\le1/2$. Moreover,
$\mathbf c^{\top}A_H^j\mathbf c\le r\|A_H\|^j$. Therefore
\begin{align}
R_{\ge3}(\theta)
&:=
\Phi(\theta)-\Phi_{\le2}(\theta)\notag\\
&\le
\frac{|D_0|r}{\theta^2}
\sum_{j\ge3}
\left(\frac{\|A_H\|}{\theta}\right)^j
\le
\frac{2|D_0|r}{\theta^2}
\left(\frac{\sqrt{2e(H)}}{\theta}\right)^3
\le
C_{k,R}\left(\frac{e(H)}{|D_0|}\right)^{3/2}.
\label{eq:general-tail}
\end{align}
Here and below constants may depend on $k$ and $R$. We also use
$\theta^2\asymp |D_0|$, which follows from
$r|D_0|=(1-o(1))m_n$.

We now distinguish three cases.

\setcounter{case}{0}

\noindent\emph{\bf Case 1: $2s<(k-1)r$.}

Since $(k-1)r-2s$ is a positive integer,
$(k-1)r-2s\ge1$. Hence the first term in \eqref{eq:general-resolvent-deficit} is
at least $1/(r\theta)$. Since
$s(r+2(k-1))\ge0$ and
$\sum_{u\in S_n}d_{H[S_n]}(u)^2\le r(r-1)^2$, the second term in \eqref{eq:general-resolvent-deficit} is at
least
$$
\frac{\operatorname{def}(S_n)}{r\theta^2}
-\frac{\binom{k}{2}+(r-1)^2}{\theta^2}.
$$
Moreover,
$0\le\nu_2\le r\|A_H\|^2\le2r e(H)$. If
$e(H)\ge\binom{k}{2}$, the final two terms in \eqref{eq:general-resolvent-deficit}
are nonnegative. If $e(H)<\binom{k}{2}$, since $s\le\binom{r}{2}$, $r\le R$, $\nu_2\le 2r e(H)<2r\binom{k}{2}$, and $\theta\ge1$ for all sufficiently large $n$, we have
$$
\begin{aligned}
\frac{(k-1)\nu_2+2s\big(e(H)-\binom{k}{2}\big)}{r\theta^3}
+\frac{\big(e(H)-\binom{k}{2}\big)\nu_2}{r\theta^4} 
&\ge{} -\frac{2s\big(\binom{k}{2}-e(H)\big)}{r\theta^3}
-\frac{\big(\binom{k}{2}-e(H)\big)\nu_2}{r\theta^4} \\
&\ge -\frac{\binom{k}{2}}{\theta^3}
\big(\frac{2s}{r}+\frac{\nu_2}{r}\big) 
\ge{} -\frac{\binom{k}{2}\big(R-1+2\binom{k}{2}\big)}{\theta^3}.
\end{aligned}
$$
Thus, in either case, the sum of the last two terms in \eqref{eq:general-resolvent-deficit} is at least $-C_{k,R}/\theta^3$.

Suppose first that
$\operatorname{def}(S_n)\le\sqrt{|D_0|}$. Since
$e(H)\le R^2+4\operatorname{def}(S_n)$, we have
$e(H)=O_R(\sqrt{|D_0|})$. The tail estimate \eqref{eq:general-tail} gives
$
R_{\ge3}(\theta)
=O_{k,R}(|D_0|^{-3/4})
=o(\theta^{-1}).
$
Thus, for all sufficiently large $n$,
$$
1-\Phi_{\le2}(\theta)
\ge
\frac1{r\theta}
-\frac{\binom{k}{2}+(r-1)^2}{\theta^2}
-\frac{C_{k,R}}{\theta^3}
>
R_{\ge3}(\theta).
$$

Suppose next that
$\operatorname{def}(S_n)>\sqrt{|D_0|}$. Then $\operatorname{def}(S_n)\to\infty$. Since
$s(r+2(k-1))\ge0$, $r\le R$, and
$\sum_{u\in S_n}d_{H[S_n]}(u)^2\le r(r-1)^2$, the numerator of the
second term in \eqref{eq:general-resolvent-deficit} is at least
$
\operatorname{def}(S_n)
-R\binom{k}{2}
-R(R-1)^2.
$
Hence, for all sufficiently large $n$, it is at least
$\operatorname{def}(S_n)/2$. Therefore the second term in \eqref{eq:general-resolvent-deficit} is at least
$\operatorname{def}(S_n)/(2r\theta^2)$.

Since $e(H)\le R^2+4\operatorname{def}(S_n)$, we have
$e(H)=O_R(\operatorname{def}(S_n))$. Also,
$\operatorname{def}(S_n)/|D_0|=o(1)$. Hence the tail estimate \eqref{eq:general-tail} gives
$$
\frac{
R_{\ge3}(\theta)
}{
\operatorname{def}(S_n)/|D_0|
}
\le
C_{k,R}
\sqrt{
\frac{\operatorname{def}(S_n)}{|D_0|}
}
=o(1).
$$

Since $\theta^2=(r+o(1))|D_0|$, we have
$
\frac{\operatorname{def}(S_n)}{2r\theta^2}
=\left(\frac1{2r^2}+o(1)\right)
\frac{\operatorname{def}(S_n)}{|D_0|}.
$
Moreover, $\theta^2\asymp |D_0|$ and
$\operatorname{def}(S_n)>\sqrt{|D_0|}$, so
$$
\frac{C_{k,R}/\theta^3}
{\operatorname{def}(S_n)/|D_0|}
=O_{k,R}\Big(
\frac1{\operatorname{def}(S_n)\sqrt{|D_0|}}
\Big)
=o(1).
$$
Thus both $C_{k,R}/\theta^3$ and $R_{\geq3}(\theta)$ are
$o(\operatorname{def}(S_n)/|D_0|)$, whereas
$\operatorname{def}(S_n)/(2r\theta^2)$ has a fixed positive leading
coefficient on this scale. Returning to
\eqref{eq:general-resolvent-deficit}, its first term is positive, its
second term is at least $\operatorname{def}(S_n)/(2r\theta^2)$, and the
sum of its last two terms is at least $-C_{k,R}/\theta^3$. Hence
$$
1-\Phi_{\leq 2}(\theta)
\ge \frac{\operatorname{def}(S_n)}{2r\theta^2}
-\frac{C_{k,R}}{\theta^3}
>R_{\geq 3}(\theta).
$$
Thus the required comparison holds in both subcases, and hence
$\Phi(\theta)<1$. Since $\Phi$ is strictly decreasing and
$\Phi(\rho(G_n))=1$, this gives
$\rho(G_n)<\theta$, a contradiction.

\noindent\emph{\bf Case 2: $2s=(k-1)r$.}

Since $s\le\binom r2$, the equality $2s=(k-1)r$ implies $r\ge k$.
By Lemma~\ref{lem:decaen},
$$
\begin{aligned}
s(r\!\!+\!\!2(k\!\!-\!\!1))
\!\!-\!\!r\binom{k}{2}
\!\!-\!\!\sum_{u\in S_n}d_{S_n}(u)^2
\ge
s\Big(
r\!\!+\!\!2(k-1)
\!\!-\!\!\frac{2s}{r-1}
\!\!-\!\!r\!\!+\!\!2
\Big)
\!\!-\!\!r\binom{k}{2}
=
\frac{(k-1)r(r-k)}{2(r-1)}.
\end{aligned}
$$

If $r=k$, then $H[S_n]=K_k$. If $\operatorname{def}(S_n)=0$, then
$e(H)-s\le(3+1/r)\operatorname{def}(S_n)$ gives
$e(H)=s=\binom{k}{2}$. Hence $H$ consists of $K_k$ and isolated
vertices. Therefore, after deleting isolated vertices,
$
G_n=K_k\vee I_{|D_0|},
m_n=k|D_0|+\binom{k}{2}.
$
The spectral radius of $K_k\vee I_{|D_0|}$ is the positive root of
$
x^2-(k-1)x=k|D_0|
=m_n-\binom{k}{2}.
$
By the definition of $g_k(m_n)$, this gives
$\rho(G_n)=g_k(m_n)$, a contradiction.

In every remaining case, the numerator of the second term in
\eqref{eq:general-resolvent-deficit} is bounded below by
$
C_{k,R}\bigl(1+\operatorname{def}(S_n)\bigr)
$
for some constant $C_{k,R}>0$. Indeed, if $r>k$, the preceding
inequality gives a positive constant contribution, while if $r=k$,
the excluded case implies $\operatorname{def}(S_n)\ge1$.

Furthermore,
$
e(H)\ge s=\frac{(k-1)r}{2}\ge\binom{k}{2},
$
so the final two terms in \eqref{eq:general-resolvent-deficit} are nonnegative.
Since
$
e(H)\le R^2+4\operatorname{def}(S_n)
$
and
$
\operatorname{def}(S_n)/|D_0|=o(1),
$
the tail estimate \eqref{eq:general-tail} gives
$$
\frac{
R_{\ge3}(\theta)
}{
(1+\operatorname{def}(S_n))/|D_0|
}
\le
C_{k,R}
\sqrt{
\frac{1+\operatorname{def}(S_n)}{|D_0|}
}
=o(1).
$$
Since $\theta^2\asymp |D_0|$, the positive second term in \eqref{eq:general-resolvent-deficit} dominates the
tail in \eqref{eq:general-tail}. Hence
$
1-\Phi_{\le2}(\theta)>R_{\ge3}(\theta).
$
Thus $\Phi(\theta)<1$ and $\rho(G_n)<\theta$, a
contradiction.

\noindent\emph{\bf Case 3: $2s>(k-1)r$.}

Since $2s\le r(r-1)$ and $2s>(k-1)r$, we have $r\ge k+1$.
Fix an orientation $u\to v$ of every edge
$uv\in E(H[S_n])$. An ordered $(k-1)$-tuple
$(u_1,\ldots,u_{k-1})$ from $S_n\setminus\{u,v\}$ and an ordered
$k$-tuple $(z_1,\ldots,z_k)$ from $D_0$ determine the cycle
$
u-v-z_1-u_1-z_2-u_2-\cdots-z_{k-1}-u_{k-1}-z_k-u.
$
This cycle has $uv$ as its unique edge inside $S_n$. It therefore
determines $uv$, and the fixed orientation of $uv$ determines the
displayed ordering. Thus no cycle is counted twice.

There are $(k-1)!\binom{r-2}{k-1}$ choices for the first tuple and
$k!\binom{|D_0|}{k}$ choices for the second. Since
$
k!\binom{|D_0|}{k}=(1+o(1))|D_0|^k
$
and
$
m_n=r|D_0|+e(H)
$
with $e(H)=o(m_n)$, we have
$
|D_0|=(1+o(1))m_n/r.
$
Therefore
$$
\begin{aligned}
N(C_{2k+1},G_n)
\ge
s(k-1)!k!
\binom{r-2}{k-1}
\binom{|D_0|}{k}
=
\Big(
\frac{
s(k-1)!\binom{r-2}{k-1}
}{r^k}
-o(1)
\Big)m_n^k.
\end{aligned}
$$

Since $2s>(k-1)r$, we have
$s\ge\lfloor r(k-1)/2\rfloor+1.$
By Observation~\ref{obs:threshold-optimization}~(ii),
$$
\frac{
s(k-1)!\binom{r-2}{k-1}
}{r^k}
\ge
\frac{
\lceil k^2/2\rceil (k-1)!
}{(k+1)^k}.
$$
Consequently,
$$
N(C_{2k+1},G_n)
\ge
\left(
\frac{
\lceil k^2/2\rceil (k-1)!
}{(k+1)^k}
-o(1)
\right)m_n^k,
$$
a contradiction. This proves the
proposition.
\end{proof}


We now consider the case where $G$ is close to a complete bipartite
graph and both bipartition classes tend to infinity. In this regime,
the non-injective closed walks of length $2k+1$ are negligible. We
combine this fact with the odd spectral moment estimate to obtain a
lower bound for $N(C_{2k+1},G)$ which is stronger than the bound
required in Theorem~\ref{thm:main}.

\begin{lemma}\label{lem:moment}
Let $G$ be an $m$-edge graph with adjacency eigenvalues
$\lambda_1\ge\lambda_2\ge\cdots\ge\lambda_N$, where
$\lambda_1=\rho(G)$, and write $A=A(G)$.  Then
$$
 \tr(A^{2k+1})
 =2\lambda_1^{2k-1}(\lambda_1^2-m)
 +\sum_{i=2}^N
 \lambda_i^2(\lambda_1^{2k-1}+\lambda_i^{2k-1}).
$$
Every summand in the sum is nonnegative.
\end{lemma}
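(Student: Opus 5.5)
The identity follows from the spectral decomposition together with the two trace identities $\tr(A^{2k+1})=\sum_i\lambda_i^{2k+1}$ and $\tr(A^2)=\sum_i\lambda_i^2=2m$. We add and subtract the quantity $\lambda_1^{2k-1}\sum_{i\ge1}\lambda_i^2$, which equals $2m\lambda_1^{2k-1}$.

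\emph{The identity.} Write
$$
\sum_{i=1}^N\lambda_i^{2k+1}
=\lambda_1^{2k+1}+\sum_{i=2}^N\lambda_i^{2k+1}.
$$
Now add and subtract $\lambda_1^{2k-1}\sum_{i=2}^N\lambda_i^2=\lambda_1^{2k-1}(2m-\lambda_1^2)$. This gives
$$
\tr(A^{2k+1})=\lambda_1^{2k+1}-\lambda_1^{2k-1}(2m-\lambda_1^2)+\sum_{i=2}^N\lambda_i^2\bigl(\lambda_1^{2k-1}+\lambda_i^{2k-1}\bigr).
$$
The first two terms combine to $2\lambda_1^{2k+1}-2m\lambda_1^{2k-1}=2\lambda_1^{2k-1}(\lambda_1^2-m)$, which is the claimed formula. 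This step is purely algebraic.

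\emph{Nonnegativity of the summands.} The factor $\lambda_i^2$ is nonnegative, so it suffices to show $\lambda_1^{2k-1}+\lambda_i^{2k-1}\ge0$ for each $i\ge2$. By Perron--Frobenius, $\lambda_1=\rho(G)\ge|\lambda_i|$ for every $i$, and $\lambda_1\ge0$. The exponent $2k-1$ is odd, so $t\mapsto t^{2k-1}$ is increasing on $\mathbb R$ and odd. Hence
$$
\lambda_i^{2k-1}\ge(-|\lambda_i|)^{2k-1}=-|\lambda_i|^{2k-1}\ge-\lambda_1^{2k-1},
$$
which gives $\lambda_1^{2k-1}+\lambda_i^{2k-1}\ge0$.

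The only point needing care is the Perron--Frobenius bound $\lambda_1\ge|\lambda_N|$ for a possibly disconnected graph. It holds because the adjacency matrix is nonnegative, so its spectral radius is itself an eigenvalue and is the largest one, $\lambda_1$. There is no real obstacle; the lemma is a bookkeeping identity prepared for the later moment argument.
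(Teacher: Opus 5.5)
Your proof is correct and follows the paper's argument exactly: add and subtract $\lambda_1^{2k-1}\sum_{i=2}^N\lambda_i^2$, use $\sum_i\lambda_i^2=\tr(A^2)=2m$, and get nonnegativity from $|\lambda_i|\le\lambda_1$ together with the oddness of $2k-1$. One small slip: your opening summary names the added-and-subtracted quantity as $\lambda_1^{2k-1}\sum_{i\ge1}\lambda_i^2$, but the computation you actually carry out correctly uses the sum over $i\ge2$.
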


\begin{proof}
The trace of $A^{2k+1}$ is the sum of the $(2k+1)$st powers of the
eigenvalues.  Add and subtract
$\lambda_1^{2k-1}\sum_{i=2}^N\lambda_i^2$ to obtain
\begin{align*}
 \tr(A^{2k+1})
 &=\lambda_1^{2k+1}+\sum_{i=2}^N\lambda_i^{2k+1}\\
 &=\lambda_1^{2k+1}
  -\lambda_1^{2k-1}\sum_{i=2}^N\lambda_i^2
  +\sum_{i=2}^N\lambda_i^2
   (\lambda_1^{2k-1}+\lambda_i^{2k-1})\\
 &=2\lambda_1^{2k-1}(\lambda_1^2-m)
  +\sum_{i=2}^N\lambda_i^2
   (\lambda_1^{2k-1}+\lambda_i^{2k-1}),
\end{align*}
where the last equality uses
$\sum_{i=2}^N\lambda_i^2=2m-\lambda_1^2$.
Finally, $|\lambda_i|\le\lambda_1$ and $2k-1$ is odd, so
$\lambda_1^{2k-1}+\lambda_i^{2k-1}\ge0$; multiplying by
$\lambda_i^2\ge0$ proves the last assertion.
\end{proof}

The following proposition gives the spectral estimate that will be
used to count $C_{2k+1}$ when both bipartition classes have orders
tending to infinity.

\begin{proposition}
\label{prop:moment-large}
Let $G_n$ be a sequence of graphs with $e(G_n)=m_n$ such that
$\rho(G_n)>g_k(m_n)$. Suppose there are complete bipartite graphs
$K_{A_n,B_n}$ satisfying
$$
\dist(G_n,K_{A_n,B_n})=o(m_n),
\qquad
\min\{|A_n|,|B_n|\}\longrightarrow\infty.
$$
Then
$$
\tr(A(G_n)^{2k+1})
\ge\bigl((2k+1)(k-1)-o(1)\bigr)m_n^k.
$$
\end{proposition}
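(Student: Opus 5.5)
The plan is to start from the identity of Lemma~\ref{lem:moment}, keep the leading term $2\lambda_1^{2k-1}(\lambda_1^2-m)$ together with the single summand $i=N$, and discard all other summands, which Lemma~\ref{lem:moment} shows are nonnegative. The leading term should supply $2(k-1)m^k$ and the $i=N$ summand the remaining $(2k-1)(k-1)m^k$. Throughout write $m=m_n$, $\lambda_i=\lambda_i(A(G_n))$ and $\theta=g_k(m)$.

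\emph{Step 1: the leading term.} The map $x\mapsto x^2-(k-1)x$ is increasing for $x>(k-1)/2$. Since $\lambda_1>\theta$, Observation~\ref{obs:threshold-optimization}(i) gives
$$\lambda_1^2-(k-1)\lambda_1>m-\tbinom{k}{2},\qquad\text{that is,}\qquad \lambda_1^2-m>(k-1)\lambda_1-\tbinom k2 .$$
Next we need $\lambda_1=(1+o(1))\sqrt m$. The lower bound comes from $\lambda_1>\theta$. For the upper bound, apply Weyl's inequality (Lemma~\ref{lem:weyl}) to $A(G_n)$ and $A(K_{A_n,B_n})$. The Frobenius norm of the difference is $\sqrt{2\dist}=o(\sqrt m)$, and $|A_n||B_n|=m+o(m)$. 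Hence
$$\lambda_1=(1+o(1))\sqrt m,\qquad \lambda_N=-(1+o(1))\sqrt m .$$
Consequently the leading term is at least $2(k-1)\lambda_1^{2k}-O(\lambda_1^{2k-1})\ge (2(k-1)-o(1))m^k$.

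\emph{Step 2: an upper bound on $|\lambda_N|$ accurate to $O(1)$.} This is the crux of the argument, since we need $\lambda_1-|\lambda_N|$ bounded below by a positive constant and not merely $o(\sqrt m)$. From $\sum_{i\ge2}\lambda_i^2=2m-\lambda_1^2$ we get $\lambda_N^2\le 2m-\lambda_1^2$. Combining this with Step 1,
$$\lambda_N^2< m-(k-1)\lambda_1+\tbinom k2 .$$
Since $\lambda_1>\sqrt m+\frac{k-1}{2}+O(m^{-1/2})$, this yields
$$|\lambda_N|\le \sqrt m-\tfrac{k-1}{2}+O(m^{-1/2}),\qquad\text{so}\qquad \lambda_1-|\lambda_N|\ge k-1-o(1).$$

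\emph{Step 3: the $i=N$ summand.} Because $\lambda_N<0$, we have $\lambda_N^{2k-1}=-|\lambda_N|^{2k-1}$. The mean value theorem applied to $x^{2k-1}$ then gives
$$\lambda_1^{2k-1}-|\lambda_N|^{2k-1}\ge (2k-1)|\lambda_N|^{2k-2}(\lambda_1-|\lambda_N|).$$
Multiplying by $\lambda_N^2$ and using $|\lambda_N|=(1+o(1))\sqrt m$ from Step 1, the summand is at least
$$(2k-1)|\lambda_N|^{2k}(k-1-o(1))=\bigl((2k-1)(k-1)-o(1)\bigr)m^k .$$

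Adding the leading term from Step 1 and the $i=N$ summand from Step 3, and dropping the remaining nonnegative summands, gives $\tr(A^{2k+1})\ge((2k+1)(k-1)-o(1))m^k$. The only delicate point is the second-order bookkeeping in Step 2, where the two $\frac{k-1}{2}$ shifts must combine to exactly $k-1$. The hypothesis $\min\{|A_n|,|B_n|\}\to\infty$ is not actually needed for this spectral estimate; only $|A_n||B_n|=(1+o(1))m$ enters, through Weyl's inequality. That hypothesis is presumably used later, when the trace is converted into a count of cycles.
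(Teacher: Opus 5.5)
Your proof is correct and follows the paper's argument. Like the paper, you keep the leading term and the $i=N$ summand of Lemma~\ref{lem:moment}, use Weyl's inequality to get $\lambda_1,|\lambda_N|=(1+o(1))\sqrt m$, and extract $\lambda_1-|\lambda_N|\ge k-1-o(1)$ from $\lambda_N^2\le 2m-\lambda_1^2$ together with $\lambda_1>g_k(m)$. The differences are cosmetic: with $\mu=-\lambda_N$, the paper writes $\lambda_1-\mu=(\lambda_1^2-\mu^2)/(\lambda_1+\mu)\ge 2(\lambda_1^2-m)/(\lambda_1+\mu)$ instead of expanding $g_k(m)$ to second order, and it factors $\lambda_1^{2k-1}-\mu^{2k-1}$ instead of using the mean value theorem. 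Your remark that $\min\{|A_n|,|B_n|\}\to\infty$ is not used in this step is also accurate.
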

\begin{proof}
Let $N=|V(G_n)|$, write the adjacency eigenvalues of $G_n$ as
$\lambda_1\ge\cdots\ge\lambda_N$, where $\lambda_1=\rho(G_n)$, and
write $m=m_n$.  With $\|\cdot\|_F$ denoting the Frobenius norm,
$$
 \|A(G_n)-A(K_{A_n,B_n})\|_F^2
 =2\dist(G_n,K_{A_n,B_n})=o(m).
$$
By Lemma \ref{lem:weyl},
for every $i$,  we have
$$
 \left|\lambda_i(A(G_n))-\lambda_i(A(K_{A_n,B_n}))\right|
 \le \|A(G_n)-A(K_{A_n,B_n})\|
 \le \|A(G_n)-A(K_{A_n,B_n})\|_F=o(\sqrt m).
$$
Since the two nonzero eigenvalues of $K_{A_n,B_n}$ are
$\sqrt{|A_n||B_n|}$ and $-\sqrt{|A_n||B_n|}$, and
$|A_n||B_n|=m+o(m)$, it follows that
$\lambda_1=(1+o(1))\sqrt m$ and
$\lambda_N=-(1+o(1))\sqrt m$.
Write $\mu=-\lambda_N$. Since $\lambda_1>g_k(m)>(k-1)/2$,
Observation~\ref{obs:threshold-optimization}~(i) gives
$
 \lambda_1^2-m>(k-1)\lambda_1-\binom{k}{2}
 =(k-1+o(1))\sqrt m.
$
Consequently the first term in Lemma~\ref{lem:moment} is at least
$
 (2(k-1)-o(1))m^k.
$
Also,
$
 \mu^2\le\sum_{i=2}^N\lambda_i^2=2m-\lambda_1^2,
$
and hence
$$
 \lambda_1-\mu
 =\frac{\lambda_1^2-\mu^2}{\lambda_1+\mu}
 \ge\frac{2(\lambda_1^2-m)}{\lambda_1+\mu}
 \ge k-1-o(1).
$$
The summand corresponding to $\lambda_N=-\mu$ in the second term of
Lemma~\ref{lem:moment} is therefore at least
\begin{align*}
 \mu^2(\lambda_1^{2k-1}-\mu^{2k-1})
 =\mu^2(\lambda_1-\mu)
   \sum_{j=0}^{2k-2}\lambda_1^{2k-2-j}\mu^j
 \ge\bigl((2k-1)(k-1)-o(1)\bigr)m^k.
\end{align*}
Adding the above two lower bounds proves the result.
\end{proof}

Let $W'_{2k+1}(G)$ denote the number of non-injective closed walks of
length $2k+1$ in $G$; that is, the number of closed walks
$v_0v_1\cdots v_{2k}v_0$ for which the vertices
$v_0,v_1,\ldots,v_{2k}$ are not all distinct.
Then
$$
\tr(A(G)^{2k+1})=2(2k+1)N(C_{2k+1},G)+W'_{2k+1}(G).
$$
Indeed, each copy of $C_{2k+1}$ gives $2(2k+1)$ closed walks, according
to the choice of a starting vertex and one of the two directions; all
other closed walks of this length have a repeated vertex and are
counted by $W'_{2k+1}(G)$.
To apply Proposition~\ref{prop:moment-large}, we shall prove that
$W'_{2k+1}(G)=o(m^k)$ whenever $G$ is $o(m)$ edges away from
$K_{A,B}$ and both $|A|$ and $|B|$ tend to infinity.

\begin{lemma}\label{lem:walk-tools}
	The following two statements hold.
	\begin{enumerate}
		\item[\textup{(i)}] If $F$ is a graph with $p$ edges and $j\ge1$, then
		the number of length-$j$ walks in $F$ is
		$O_j\bigl(p^{\lfloor j/2\rfloor+1}\bigr)$.
		\item[\textup{(ii)}] Let $W=v_0v_1\cdots v_{2k}v_0$ be a
		non-injective closed walk.  The graph formed by the distinct vertices
		and edges occurring on $W$ has an edge cover of size at most $k$.
	\end{enumerate}
\end{lemma}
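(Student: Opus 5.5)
For part (i), I would argue by induction on $j$, exploiting the fact that a walk is determined by its starting edge (or vertex) together with successive choices. The key elementary bound is $\sum_{v} d_F(v)^2 \le \sum_{uv\in E(F)} (d(u)+d(v)) \le 2p\cdot p$, or more simply $\sum_v d(v)^2\le 2p\max_v d(v)\le 2p^2$.

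A walk of length $j$ is a sequence $v_0v_1\cdots v_j$. I would split it into consecutive pairs of steps. The first step $v_0v_1$ is an oriented edge, giving $2p$ choices. Each subsequent pair of steps $v_iv_{i+1}v_{i+2}$ is counted as follows: given $v_i$, the number of length-2 continuations is at most $\sum_{w\sim v_i} d(w)\le 2p$. This crude per-pair bound gives $p\cdot p^{\lfloor (j-1)/2\rfloor}$ when $j$ is odd. For even $j$, I would handle one extra single step, which costs at most $\Delta\le p$. Hence the total is $O_j(p^{1+\lceil (j-1)/2\rceil})$, and one checks $1+\lceil (j-1)/2\rceil=\lfloor j/2\rfloor+1$.

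So the plan is: first choose $(v_0,v_1)$ in at most $2p$ ways; then, for each further block of two steps, bound the number of extensions from a fixed endpoint by $\sum_{w\in N(v)}d(w)\le 2p$; finally, if one step remains, bound it by $\max d\le p$. Counting the exponents gives $1+\lfloor (j-1)/2\rfloor+[j \text{ even}]=\lfloor j/2\rfloor+1$. The only point to verify is the two-step bound, which holds because $\sum_{w\in N(v)}d(w)\le\sum_w d(w)=2p$.

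For part (ii), let $F$ be the subgraph spanned by $W$, with $q$ distinct vertices, where $q\le 2k$ since $W$ is non-injective. $F$ is connected, because the walk traverses it, and it has no isolated vertices once $q\ge2$; the case $q=1$ is impossible in a simple graph. By Gallai's identity, the minimum edge cover of a graph without isolated vertices has size $q-\nu(F)$, where $\nu$ is the matching number. More directly, I would take a spanning tree $T$ of $F$ on $q$ vertices: every tree on $q\ge2$ vertices has an edge cover of size at most $\lceil q/2\rceil$, by taking a maximum matching and adding one edge for each uncovered vertex, or by induction via leaves. Since $q\le 2k$, this gives a cover of size at most $k$.

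The main subtlety is proving that a tree on $q$ vertices has an edge cover of size at most $\lceil q/2\rceil$. I would do this by induction, removing a leaf $\ell$ together with its neighbor $u$:
\begin{enumerate}
\item If $T-\{\ell,u\}$ has no singleton components, cover it inductively and add $\ell u$.
\item Otherwise, the leaves hanging at $u$ are handled by observing that a tree with a vertex of degree $\ge$ some value is covered by star edges.
\end{enumerate}
A cleaner alternative is to cite Gallai's theorem $\rho'(F)=q-\nu(F)$ and the fact that a connected graph on $q$ vertices has $\nu\ge\lfloor q/2\rfloor$ only for special graphs. Since that second fact is false in general (for example, for stars), I would instead rely on the leaf-peeling induction on spanning trees: a star on $q$ vertices needs $q-1$ edges, which fails the $\lceil q/2\rceil$ bound. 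I therefore expect the real argument must use the walk structure, namely that $W$ traverses a closed walk of $2k+1$ edges in which each edge of $F$ is used, so that $|E(W)|\le 2k+1$ steps visit the $q$ vertices. I would take the edges $v_0v_1, v_2v_3,\ldots,v_{2k-2}v_{2k-1}$, which are $k$ walk edges, and observe that they cover $v_0,\ldots,v_{2k-1}$. The only vertex possibly left uncovered is $v_{2k}$. Using the repetition, I would shift the pairing to start at an index so that $v_{2k}$'s position coincides with a repeated vertex: choose a cyclic rotation of $W$ in which the last vertex is a repeat of an earlier one. Then the $k$ edges cover all distinct vertices. This rotation step is the crux, and it works because non-injectivity guarantees some vertex appears twice among $2k+1$ cyclic positions.
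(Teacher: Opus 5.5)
Your final arguments are correct and essentially the paper's. In (i), choosing $(v_0,v_1)$ in $2p$ ways and bounding each later two-step extension by $\sum_{w\in N(v)}d(w)\le 2p$ is the paper's every-other-oriented-edge recording in sequential form; in (ii), rotating $W$ so that $v_{2k}$ repeats an earlier vertex and taking $v_0v_1,v_2v_3,\ldots,v_{2k-2}v_{2k-1}$ mirrors the paper's normalization $v_0=v_i$ with edges $v_1v_2,\ldots,v_{2k-1}v_{2k}$. You should remove the spanning-tree/Gallai detour from the write-up, because its claim that every tree on $q$ vertices has an edge cover of size at most $\lceil q/2\rceil$ is false (stars are counterexamples), as you yourself observed.
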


\begin{proof}
	For \textup{(i)}, write a length-$j$ walk as $v_0v_1\cdots v_j$ and
	record every other edge traversed by the walk.  If $j$ is odd, record
	$$
	v_0v_1,\ v_2v_3,\ \ldots,\ v_{j-1}v_j;
	$$
	if $j$ is even, record
	$$
	v_0v_1,\ v_2v_3,\ \ldots,\ v_{j-2}v_{j-1},
	\quad\text{and}\quad v_{j-1}v_j.
	$$
	In either case, we record $q:=\lfloor j/2\rfloor+1$ oriented edges,
	and every vertex occurring in the walk is an endpoint of a recorded
	edge.  Since $F$ has at most $2p$ oriented edges, there are at most
	$(2p)^q$ possible records.  Each walk determines one such record
	uniquely; some records may fail to form a walk, which only makes this an
	upper bound.  Hence the number of length-$j$ walks in $F$ is at most
	$$
	(2p)^q=O_j\bigl(p^{\lfloor j/2\rfloor+1}\bigr).
	$$
	
	For \textup{(ii)}, some vertex occurs at least twice among
	$v_0,v_1,\ldots,v_{2k}$.  Choose one occurrence of such a vertex as
	the starting point and retain the direction of $W$.  We may then assume
	that $v_0=v_i$ for some
	$i\in\{1,\ldots,2k\}$.
	
	Consider the following $k$ edges traversed by $W$:
	$
	v_1v_2,\ v_3v_4,\ \ldots,\ v_{2k-1}v_{2k}.
	$
	Every vertex occurring among $v_1,\ldots,v_{2k}$ is incident with one
	of these edges.  The vertex $v_0$ is also covered, because $v_0=v_i$.
	Thus these edges cover every distinct vertex occurring on $W$; if some
	of the displayed edges coincide, there are fewer than $k$ distinct
	edges.  Consequently, the subgraph consisting of the vertices and edges
	of $W$ has an edge cover of size at most $k$.
\end{proof}
\begin{lemma}
\label{lem:small-volume}
Let $G$ be an $m$-edge graph and $Z\subseteq V(G)$.  The number of
non-injective closed walks of length $2k+1$ that visit $Z$ is at most
$$
 M\Big(\sum_{z\in Z}d_G(z)\Big)m^{k-1},
$$
where $M=M(k)>0$ is a constant depending only on $k$.
\end{lemma}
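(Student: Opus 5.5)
We will bound, for each fixed $z\in Z$ and each position, the number of non-injective closed walks $W=v_0v_1\cdots v_{2k}v_0$ with $z$ on $W$, and then sum over $z\in Z$. Since every closed walk visiting $Z$ has some index $i$ with $v_i\in Z$, and there are only $2k+1$ choices of $i$, we may rotate the walk so that $v_0=z\in Z$; this costs a factor $2k+1$ at most. So it suffices to show that, for a fixed vertex $z$, the number of non-injective closed walks of length $2k+1$ starting at $z$ is $O_k(d_G(z)m^{k-1})$.

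Fix $z$ and consider the first step $v_0v_1$: there are $d_G(z)$ choices of $v_1$. It remains to count the walks $v_1v_2\cdots v_{2k}v_0$ of length $2k$ with $v_{2k}\in N(z)$, subject to the non-injectivity condition. We encode the walk by recording the oriented edges $v_2v_3, v_4v_5,\ldots,v_{2k-2}v_{2k-1}$, which are $k-1$ edges and give at most $(2m)^{k-1}$ choices. Together with $v_0=z$ and $v_1$, this determines every vertex except $v_{2k}$. The vertex $v_{2k}$ must be adjacent to both $v_{2k-1}$ and $z$, and this is where non-injectivity enters.

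\textbf{Main obstacle.} A naive count allows $v_{2k}$ to range over a common neighbourhood, which could be large; this would give only $d(z)m^{k-1}\cdot\Delta$. The fix is to exploit the repeated vertex in the spirit of Lemma~\ref{lem:walk-tools}(ii): we choose which pair of indices $a<b$ has $v_a=v_b$, at most $\binom{2k+1}{2}$ choices, and pick the recorded edges so as to cover all vertices except one whose identity is forced by the repetition. Concretely, we take the cyclic sequence starting at $z$ and use a greedy pairing of consecutive positions, $k-1$ recorded edges plus the first edge at $z$, covering all positions except one. If the uncovered position is $a$ or $b$, its vertex is determined by the other occurrence. By choosing the rotation and pairing appropriately, one can always arrange that the single uncovered position is a repeated one. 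This works because on a cycle of $2k+1$ positions, with position $0$ paired to position $1$ (the edge at $z$), we can pair the remaining $2k-1$ positions consecutively leaving out any prescribed position of the correct parity. If the parity is wrong, we instead pair position $0$ with $2k$, using the edge $v_{2k}v_0$ at $z$, which also has $d(z)$ choices. We will check the parity bookkeeping, including the case where $z$ itself is the repeated vertex, in which case Lemma~\ref{lem:walk-tools}(ii) applies directly.

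Each configuration then yields at most $d_G(z)\,(2m)^{k-1}$ walks. Summing over the $O_k(1)$ choices of rotation, repeated pair and pairing pattern, and then over $z\in Z$, gives the bound $M(\sum_{z\in Z}d_G(z))m^{k-1}$.
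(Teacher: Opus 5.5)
Your argument is correct, and it reaches the same bound as the paper by a more hands-on route. The shared skeleton is this: rotate so that $v_0\in Z$ (losing a factor $2k+1$), then encode each walk by one step at $v_0$ ($d_G(v_0)$ choices) plus $k-1$ further oriented edges ($(2m)^{k-1}$ choices) that together determine every position.

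The paper gets these $k$ edges structurally. It sums over the $O_k(1)$ equality types and applies Lemma~\ref{lem:walk-tools}(ii) to obtain an edge cover $S$ of the walk graph $H$ with $|S|\le k$. It then observes that any edge cover must contain an edge at $v_0$, so the rotation fixed by $Z$ causes no difficulty.

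You instead take a union bound over the $\binom{2k+1}{2}$ pairs $a<b$ with $v_a=v_b$, and choose explicit consecutive steps that leave only position $b$ uncovered. The parity bookkeeping you deferred does close, and more simply than your sketch suggests:
\begin{itemize}
\item Since $b\ge1$ always, omit position $b$.
\item Use the step $v_0v_1$ when $b$ is even and $v_{2k}v_0$ when $b$ is odd.
\item Pair the remaining $2k-2$ positions into consecutive steps. This is possible because the two path segments on either side of $b$ then have even length.
\end{itemize}
Position $a$ (possibly $a=0$) is covered, so $v_b=v_a$ is determined. No re-rotation is needed, and the case where $z$ itself is the repeated vertex needs no separate appeal to Lemma~\ref{lem:walk-tools}(ii).

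Your version avoids equality types and the edge-cover lemma, and it gives the explicit constant $M=(2k+1)\binom{2k+1}{2}2^{k-1}$. The paper's version replaces your parity case split with a rotation-invariant structural lemma that it also uses elsewhere.
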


\begin{proof}
	The assertion is trivial when $m=0$, so assume that $m\ge1$.  By
	changing the initial position of a closed walk cyclically, every walk
	counted in the lemma can be written as
	$$
	W=v_0v_1\cdots v_{2k}v_0
	$$
	with $v_0\in Z$.  Each resulting walk has at most $2k+1$ possible
	original initial positions.  Hence the total number of walks in the
	lemma is at most $2k+1$ times the number of such walks with $v_0\in Z$.
	We now count the latter walks.
	
	Let
	$$
	V_W:=\{v_0,v_1,\ldots,v_{2k}\},\qquad
	E_W:=\{v_iv_{i+1}:0\le i\le 2k\},
	$$
	where $v_{2k+1}:=v_0$, and define $H_W$ by
	$V(H_W):=V_W$ and $E(H_W):=E_W$.  Thus $H_W$ is a spanning subgraph of
	$G[V_W]$.  We regard two closed walks as having the same type if one
	can be obtained from the other by consistently renaming the vertices.
	Since a walk of length $2k+1$ has only $2k+1$ positions, there are only
	$O_k(1)$ types.
	
	Fix one type and use $v_0,v_1,\ldots,v_{2k}$ for its positions.  Let
	$H$ be the graph determined by this type through the construction above.
	By Lemma~\ref{lem:walk-tools}\textup{(ii)}, $H$ has an edge cover $S$ with
	$s:=|S|\le k$.  Since $S$ covers the vertex in position $0$, it contains
	an edge $v_0v_j$ for some $j$.
	
	To construct a walk of this fixed type, first choose
	$v_0\in Z$ and then choose $v_j\in N_G(v_0)$.  There are at most
	$\sum_{z\in Z}d_G(z)$ choices.  For each of the other $s-1$ edges
	$v_av_b\in S$, the ordered adjacent pair $(v_a,v_b)$ can be chosen in
	at most $2m$ ways.  Because $S$ covers every vertex of $H$, consistent
	choices for these ordered pairs determine the actual vertex at every
	position of $W$.  Choices that assign different values to two
	occurrences of the same vertex, identify two vertices that should be
	distinct, or fail to realize another edge of $H$ do not produce a walk
	of the fixed type and hence only decrease the count.  Thus the number
	of walks of the fixed type is at most
	$$
	\Big(\sum_{z\in Z}d_G(z)\Big)(2m)^{s-1}
	\le 2^{k-1}\Big(\sum_{z\in Z}d_G(z)\Big)m^{k-1}.
	$$
	Finally, summing over the $O_k(1)$ types and multiplying the resulting
	bound by $2k+1$ proves the result.
\end{proof}

We first consider the case where the graph consists of a complete
bipartite graph together with only $O(a_n)$ edges inside $A_n$ and
$O(b_n)$ edges inside $B_n$.

\begin{lemma}
	\label{lem:collision-complete}
	Let $G_n$ be a graph with a vertex partition
	$V(G_n)=A_n\cup B_n$.  Let $a_n:=|A_n|$ and $b_n:=|B_n|$, and suppose
	$a_n,b_n\to\infty$.  Suppose further that every edge joining $A_n$ and
	$B_n$ is present in $G_n$ and that
	$$
	e(G_n[A_n])=O(a_n),\qquad e(G_n[B_n])=O(b_n).
	$$
	Then
	$$
	W'_{2k+1}(G_n)=o\bigl((a_nb_n)^k\bigr).
	$$
\end{lemma}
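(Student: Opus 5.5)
The plan is to count non-injective closed walks by their \emph{type}, as in the proof of Lemma~\ref{lem:small-volume}, and to bound each of the $O_k(1)$ types separately. Here a type is the identification pattern of the $2k+1$ positions, refined by recording which distinct vertices lie in $A_n$ and which in $B_n$. Fix a type and let $H$ be the associated graph on the distinct vertices of the walk. By Lemma~\ref{lem:walk-tools}\textup{(ii)}, $H$ has an edge cover $S$ with $s:=|S|\le k$.

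Since $A_n,B_n$ partition $V(G_n)$, a closed walk of odd length $2k+1$ cannot alternate sides at every step. Hence $H$ contains at least one \emph{inside edge}, that is, an edge with both ends in $A_n$ or both ends in $B_n$. As in Lemma~\ref{lem:small-volume}, a walk of the fixed type is determined by choosing an ordered realization of each edge of $S$, and inconsistent choices only overcount. For each edge of $S$ the number of choices is:
\begin{itemize}
\item at most $2a_nb_n$ if it is a cross edge;
\item $O(a_n)$ if it lies inside $A_n$;
\item $O(b_n)$ if it lies inside $B_n$.
\end{itemize}

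We then split into three cases.
\begin{enumerate}
\item[(a)] If $s\le k-1$, the count is at most $O\bigl((a_nb_n)^{k-1}\bigr)$, since $O(a_n)$ and $O(b_n)$ are both $O(a_nb_n)$. This is $o\bigl((a_nb_n)^k\bigr)$ because $a_nb_n\to\infty$.
\item[(b)] If $s=k$ and $S$ contains an inside edge, the count is $O\bigl((a_n+b_n)(a_nb_n)^{k-1}\bigr)$. This is $o\bigl((a_nb_n)^k\bigr)$ because $a_n+b_n=o(a_nb_n)$ when both $a_n,b_n\to\infty$.
\item[(c)] If $s=k$ and every edge of $S$ is a cross edge, choose an inside edge $xy$ of $H$. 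Since $S$ covers $H$, there are edges $xx'$ and $yy'$ in $S$. These two edges are distinct, because a cross edge cannot contain both ends of the inside edge $xy$.
\end{enumerate}

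In case (c) we modify the encoding. Instead of choosing $xx'$ and $yy'$ independently, with $(2a_nb_n)^2$ choices, we first choose the edge $xy$ of $G_n$ and then choose $x'\in N(x)$ and $y'\in N(y)$ on the opposite side. If $xy\subseteq A_n$, this gives $O(a_n)\cdot b_n\cdot b_n=O(a_nb_n^2)$ choices. If $xy\subseteq B_n$, it gives $O(b_na_n^2)$ choices. In both cases this is $o\bigl((a_nb_n)^2\bigr)$. The remaining $k-2$ edges of $S$ contribute at most $(2a_nb_n)^{k-2}$. We must check that after this substitution every vertex of $H$ is still determined. This holds because $x,x',y,y'$ are all fixed, and the other cover edges are chosen as before.

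Summing over the $O_k(1)$ types then gives $W'_{2k+1}(G_n)=o\bigl((a_nb_n)^k\bigr)$. I expect case (c) to be the main obstacle. The naive edge-cover bound $(a_nb_n)^k$ is exactly of the critical order there, and the saving must come from the forced inside edge. The delicate points are that this edge is absent from the cover, and that the argument must not assume a relation between $a_n$ and $b_n$; for instance, one side may be much larger than a power of the other. This is why I avoid bounding walks inside $G_n[A_n]$ by Lemma~\ref{lem:walk-tools}\textup{(i)}, which would give $O(a_n^{k+1})$ and need not be $o\bigl((a_nb_n)^k\bigr)$. Instead, every estimate keeps one factor from each side for each cross edge.
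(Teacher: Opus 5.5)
Your proof is correct, but it is organized differently from the paper's. The paper sorts walks by the sequence of sides visited and by the (necessarily odd) number $j$ of internal steps, treating three ranges of $j$ separately:
\begin{itemize}
\item for $j=1$ it counts directly, getting $O_k(p_Aa^{k-1}b^k)$, and uses the fact that the repeated vertex must be a collision of two positions on the same side to lose a factor $a$ or $b$;
\item for $3\le j\le 2k-1$ it cuts the walk into monochromatic blocks and applies Lemma~\ref{lem:walk-tools}(i) block by block, bounding \emph{all} closed walks with that side pattern, so non-injectivity is not used there;
\item for $j=2k+1$ it invokes Lemma~\ref{lem:small-volume} on the graph of internal edges.
\end{itemize}

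You instead run the edge-cover encoding of Lemma~\ref{lem:small-volume} uniformly over side-refined types. Non-injectivity then enters only through the bound $|S|\le k$. The only genuinely new step is your case (c): there you re-encode the two cross edges of $S$ at the ends of a forced inside edge $xy$ through $xy$ itself, which saves a factor $a_n$ or $b_n$. Your checks that $xx'\ne yy'$, that $x'\neq y$, and that $x,x',y,y'$ determine every vertex covered by those two edges are exactly what this re-encoding needs. Coincidences such as $x'=y'$ only cause overcounting, so they are harmless.

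Your route is shorter and avoids both the block bookkeeping and Lemma~\ref{lem:walk-tools}(i). The paper's route gives slightly more structural information: for instance, closed walks with between $3$ and $2k-1$ internal steps are negligible regardless of injectivity. Both arguments give the same quantitative bound $W'_{2k+1}(G_n)=O\bigl((a_nb_n)^k/\min\{a_n,b_n\}\bigr)$.
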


\begin{proof}
	For the rest of the proof, write $A=A_n$, $B=B_n$,
	$F_A=G_n[A_n]$, $F_B=G_n[B_n]$, $a=a_n$, $b=b_n$,
	$p_A=e(F_A)$, and $p_B=e(F_B)$.
	For a closed walk $W=v_0v_1\cdots v_{2k}v_0$, record, in order, the
	part containing each of $v_0,v_1,\ldots,v_{2k}$.  Call the step $v_iv_{i+1}$ internal if its endpoints
	lie in the same part, and cross otherwise.  Every cross step changes
	the part containing the current vertex.  Since the walk returns to its
	initial part, the number of cross steps is even.  Hence the number $j$
	of internal steps is odd.
	
	Suppose first that $j=1$ and that the unique internal step lies in
	$A$.  Once the part containing the vertex at each position is fixed,
	there are $k+1$ positions in $A$ and
	$k$ positions in $B$.  Choose the ordered internal edge in $F_A$ in
	$O(p_A)$ ways.  Its two endpoints are now fixed, while the remaining
	$k-1$ positions in $A$ and the $k$ positions in $B$ have at most
	$a^{k-1}b^k$ choices.  There are only $O_k(1)$ possible such sequences,
	so before imposing non-injectivity the number of walks is
	$O_k(p_Aa^{k-1}b^k)$.
	
	Because $A$ and $B$ are disjoint, a repeated vertex must occur at two
	positions in the same part.  Fixing a pair of equal $A$-positions
	removes one free $A$-choice and gives
	$O_k(p_Aa^{k-2}b^k)$ walks.  Fixing a pair of equal $B$-positions gives
	$O_k(p_Aa^{k-1}b^{k-1})$ walks.  A union bound over the $O_k(1)$ pairs
	of positions, together with $p_A=O(a)$, gives
	$$
	O_k\bigl(p_Aa^{k-2}b^k+p_Aa^{k-1}b^{k-1}\bigr)
	=O_k\Big((ab)^k\Big(\frac1a+\frac1b\Big)\Big).
	$$
	When the unique internal step lies in $B$, the same argument applies
	with $a$ and $b$ interchanged.  Since $a,b\to\infty$, the total
	contribution in the case $j=1$ is $o((ab)^k)$.
	
	Now let $3\le j=2h+1\le2k-1$ and fix the sequence of the parts
	containing $v_0,v_1,\ldots,v_{2k}$.  Read this sequence cyclically,
	and cut it between $v_i$ and $v_{i+1}$ whenever
	$v_iv_{i+1}$ is a cross step.  The resulting blocks are the maximal
	consecutive vertex sequences lying entirely in one part.  For example,
	the cyclic sequence $AAABBABBB$ is cut as
	$AAA\mid BB\mid A\mid BBB$, with one further cut between the final
	$B$ and the initial $A$.  There are
	$$
	(2k+1)-(2h+1)=2(k-h)
	$$
	cross steps.  Cutting a cyclic sequence at these $2(k-h)$ positions
	produces $2(k-h)$ blocks, which alternate between $A$ and $B$.
	Consequently, there are $k-h$ blocks in $A$ and $k-h$ blocks in $B$.
	A block contained in a part $X\in\{A,B\}$ and containing $q$
	internal steps is a length-$q$ walk in $F_X$.  If $2k+1\geq q\ge1$, then
	$e(F_X)=O(|X|)$, and Lemma~\ref{lem:walk-tools}\textup{(i)} gives
	$$
	O_k\bigl(|X|^{\lfloor q/2\rfloor+1}\bigr)
	$$
	possible vertex sequences.  This formula also holds for $q=0$, when
	the block consists of one vertex.
	
	Let $q_1,\ldots,q_{k-h}$ be the numbers of internal steps in the
	$A$-blocks, and let $r_1,\ldots,r_{k-h}$ be the numbers of internal
	steps in the $B$-blocks.  Multiplying the preceding bounds over all
	blocks in each part gives
	\begin{align*}
		\prod_{i=1}^{k-h}O_k\left(a^{\lfloor q_i/2\rfloor+1}\right)
		&=O_k(a^{E_A}),&
		E_A&:=k-h+\sum_i\left\lfloor\frac{q_i}{2}\right\rfloor,\\
		\prod_{i=1}^{k-h}O_k\left(b^{\lfloor r_i/2\rfloor+1}\right)
		&=O_k(b^{E_B}),&
		E_B&:=k-h+\sum_i\left\lfloor\frac{r_i}{2}\right\rfloor.
	\end{align*}
	Once the vertex sequence in every block is chosen, every cross step
	automatically forms an edge of $G$, because all edges between $A$ and
	$B$ are present.  Hence the number of walks with this fixed sequence
	of parts is at most $O_k(a^{E_A}b^{E_B})$.  This
	counts all closed walks with this sequence and hence also bounds the
	non-injective ones.
	
	Let $\ell$ be the number of blocks containing an odd number of
	internal steps.  Since
	$
	\sum_i q_i+\sum_i r_i=2h+1,
	$
	and 
	$\ell$ is odd and positive, we have
	$
	E_A+E_B
	=2(k-h)+\frac{2h+1-\ell}{2}.
	$
	Moreover,
	$$
	\sum_i\left\lfloor\frac{q_i}{2}\right\rfloor
	\le \left\lfloor\frac{\sum_iq_i}{2}\right\rfloor\le h,
	\qquad
	\sum_i\left\lfloor\frac{r_i}{2}\right\rfloor
	\le \left\lfloor\frac{\sum_ir_i}{2}\right\rfloor\le h.
	$$
	It follows that
	$
	E_A\le k$, $E_B\le k$ and 
	$
	(k-E_A)+(k-E_B)=h+\frac{\ell-1}{2}\ge h$.
	Hence
	$$
	\frac{a^{E_A}b^{E_B}}{(ab)^k}
	=\frac{1}{a^{k-E_A}b^{k-E_B}}=o(1),
	$$
	because $h\ge1$ and $a,b\to\infty$.  Each of the $2k+1$ positions of
	the closed walk is assigned to either $A$ or $B$.  Hence there are at
	most $2^{2k+1}$ possible sequences recording the parts containing the
	vertices; since $k$ is fixed, this
	number is $O_k(1)$.  Thus all walks with
	$3\le j\le2k-1$ contribute $o((ab)^k)$.
	
	It remains to consider $j=2k+1$.  Since all $2k+1$ steps are internal,
	the walk stays entirely in $A$ or entirely in $B$.  Let $F$ be the
	graph on $A\cup B$ with $E(F):=E(F_A)\cup E(F_B)$.  Thus the walks in
	this case are precisely the non-injective closed walks of length
	$2k+1$ in $F$.  Let
	$p:=e(F)=p_A+p_B=O(a+b)$.  Taking $Z=V(F)$ in
	Lemma~\ref{lem:small-volume} counts all these walks.  Hence, by the
	handshaking lemma,
	$$
	W'_{2k+1}(F)
	\le M\Big(\sum_{z\in V(F)}d_F(z)\Big)p^{k-1}
	=2Mp^k.
	$$
	Since $a,b\to\infty$,
	$$
	\frac{(a+b)^k}{(ab)^k}
	=\left(\frac1a+\frac1b\right)^k=o(1),
	$$
	and therefore $W'_{2k+1}(F)=O((a+b)^k)=o((ab)^k)$.  Combining
	the three cases proves the lemma.
\end{proof}

The next lemma shows that, when $G_n$ is close to a complete bipartite
graph with both parts large, the non-injective closed walks of length
$2k+1$ can be ignored.

\begin{lemma}\label{lem:collision}
	Fix $k\ge2$ and $C>0$.  Let $G_n$ be a graph sequence, and let
	$A_n,B_n\subseteq V(G_n)$ be disjoint sets satisfying
	$$
	e(G_n)=m_n,\quad
	N(C_{2k+1},G_n)\le C m_n^k,\quad
	\dist(G_n,K_{A_n,B_n})=o(m_n),\quad
	\min\{|A_n|,|B_n|\}\to\infty.
	$$
	Then
	$$
	W'_{2k+1}(G_n)=o(m_n^k).
	$$
\end{lemma}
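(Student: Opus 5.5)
The plan is to delete a small exceptional set $Z$ of vertices and to charge every non-injective walk through $Z$ to Lemma~\ref{lem:small-volume}. On what remains we add the missing cross edges, which can only increase $W'_{2k+1}$, and then apply Lemma~\ref{lem:collision-complete}. The hypothesis $N(C_{2k+1},G_n)\le Cm_n^k$ is used only once: to show that after the cleaning there are just $O(a_n)$ internal edges in $A$ and $O(b_n)$ in $B$.

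\emph{Setup and cleaning.} Write $a=|A_n|$, $b=|B_n|$ and $d:=\dist(G_n,K_{A_n,B_n})=o(m_n)$. Since $|m_n-ab|\le d$, we have $ab=(1+o(1))m_n$. Fix a sequence $\delta=\delta_n\to0$ slowly enough that $d/(\delta m_n)\to0$. Let $Z$ consist of three kinds of vertices:
\begin{itemize}
\item all vertices outside $A_n\cup B_n$;
\item vertices of $A_n$ missing at least $\delta b$ vertices of $B_n$, or having at least $\delta b$ neighbours inside $A_n$;
\item the symmetric class of vertices of $B_n$ (missing at least $\delta a$ vertices of $A_n$, or having at least $\delta a$ neighbours inside $B_n$).
\end{itemize}
Every edge at a vertex outside $A_n\cup B_n$ lies in the symmetric difference. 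There are at most $2d/(\delta b)$ bad vertices in $A_n$, and each has degree at most $b$ plus its internal degree. Since internal edges total at most $d$, we get
\[
\sum_{z\in Z}d_{G_n}(z)\le 2d+\frac{2d}{\delta b}\,b+\frac{2d}{\delta a}\,a+4d=O(d/\delta)=o(m_n).
\]
Lemma~\ref{lem:small-volume} then bounds the non-injective closed walks meeting $Z$ by $o(m_n)\cdot m_n^{k-1}=o(m_n^k)$. Every remaining non-injective walk lies in $G':=G_n-Z$, on parts $A'=A_n\setminus Z$ and $B'=B_n\setminus Z$, with $|A'|=(1-o(1))a$ and $|B'|=(1-o(1))b$. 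In $G'$ every vertex of $A'$ misses fewer than $\delta b$ vertices of $B'$, and symmetrically for $B'$.

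\emph{Bounding internal edges (main step).} We need $e(G'[A'])=O(a)$ and $e(G'[B'])=O(b)$. Fix an oriented internal edge $uv$ in $A'$ and count cycles
\[
u\,v\,z_1\,u_1\,z_2\cdots u_{k-1}\,z_k\,u
\]
with $z_i\in B'$ and $u_i\in A'$, as in Case~3 of Proposition~\ref{prop:bounded}. We choose the vertices greedily along the cycle. Each new vertex must avoid the $O(k)$ vertices already chosen and must be adjacent to its predecessor, and for $z_k$ also to $u$. Because each vertex misses at most $\delta$ of the opposite side, there are at least $(1-O_k(\delta))b-O(k)$ choices for each $z_i$ (for $z_k$ this uses two adjacency constraints) and $(1-O_k(\delta))a-O(k)$ for each $u_i$. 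This gives at least $(1-o(1))a^{k-1}b^k$ cycles through $uv$. A given copy of $C_{2k+1}$ arises from only $O_k(1)$ pairs consisting of an oriented edge and a labelling. Hence
\[
e(G'[A'])\,a^{k-1}b^k\lesssim_k N(C_{2k+1},G_n)\le Cm_n^k=(C+o(1))(ab)^k,
\]
so $e(G'[A'])=O_{k,C}(a)$, and symmetrically $e(G'[B'])=O(b)$. The delicate points are the bounded multiplicity of this count and the requirement that $\delta\to0$ slowly enough for both the degree-sum estimate and the greedy lower bounds to hold. I expect this to be the main obstacle, but it reduces to routine bookkeeping.

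\emph{Conclusion.} Let $G''$ be $G'$ with all missing $A'$--$B'$ edges added. Every closed walk in $G'$ is a closed walk in $G''$, so $W'_{2k+1}(G')\le W'_{2k+1}(G'')$, and adding cross edges does not change the internal edge counts. Lemma~\ref{lem:collision-complete} applies to $G''$ with parts $A',B'$, both of size tending to infinity, and gives $W'_{2k+1}(G'')=o\bigl((|A'||B'|)^k\bigr)=o(m_n^k)$. Adding the $o(m_n^k)$ walks that meet $Z$ proves $W'_{2k+1}(G_n)=o(m_n^k)$.
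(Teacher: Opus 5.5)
Your proposal is correct and follows the paper's proof essentially step for step. It uses the same exceptional set $Z$ handled by Lemma~\ref{lem:small-volume}, the same $O(a)$ and $O(b)$ bounds on internal edges via cycles with a unique internal edge, and the same completion of the cross edges to invoke Lemma~\ref{lem:collision-complete}. The only differences are cosmetic: your extra internal-degree condition in $Z$ is unnecessary but harmless. You also lower-bound the cycles through $uv$ greedily inside the cleaned parts $A',B'$, whereas the paper ranges over all of $A,B$ and subtracts the copies destroyed by missing cross pairs.
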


\begin{proof}
	For the rest of the proof, write $G=G_n$, $A=A_n$, $B=B_n$,
	$a=|A|$, $b=|B|$, and $m=m_n$.  Let
	$
	q=\dist(G,K_{A,B}).
	$
	Since $q=o(m)$,
	\begin{equation}
		ab=(1+o(1))m.
		\label{eq:ab}
	\end{equation}
	Indeed, $K_{A,B}$ has $ab$ edges and
	$|m-ab|\le\dist(G,K_{A,B})=q$.
	Let
	$$
	q_-:=\bigl|\{uv:u\in A,\ v\in B,\ uv\notin E(G)\}\bigr| \ \
\mbox{
	and} \ \
	\eta=\sqrt{\frac qm}+m^{-1/4}.
	$$
	Thus $\eta=o(1)$ and
	\begin{equation}\label{eq:q-eta}
		\frac{q}{\eta m}
		\le \sqrt{\frac qm}=o(1).
	\end{equation}
	Define
	$A_0=\{u\in A:|B\setminus N_G(u)|\le\eta b\}$, and
		$B_0=\{v\in B:|A\setminus N_G(v)|\le\eta a\}$.
	Summing over the vertices in $A$ and in $B$, respectively, gives
	$$
	\sum_{u\in A}|B\setminus N_G(u)|=q_-,
	\qquad
	\sum_{v\in B}|A\setminus N_G(v)|=q_-.
	$$
	Since $q_-\le q$, the definitions of $A_0,B_0$, together with
	\eqref{eq:ab} and \eqref{eq:q-eta}, give
	\begin{equation}
		|A\setminus A_0|\le\frac{q}{\eta b}=o(a),
		\qquad
		|B\setminus B_0|\le\frac{q}{\eta a}=o(b).
		\label{eq:good-sizes}
	\end{equation}
	Let
	$
	Z=V(G)\setminus(A_0\cup B_0).
	$
	Every edge of $G$ incident with a vertex outside $A\cup B$ belongs to
	$E(G)\setminus E(K_{A,B})$.  Moreover, at most $|A\setminus A_0|b$
	edges of $K_{A,B}$ are incident with $A\setminus A_0$, and at most
	$|B\setminus B_0|a$ are incident with $B\setminus B_0$.  Therefore
	\eqref{eq:good-sizes} implies
	\begin{equation}\label{eq:exceptional-volume}
		\sum_{z\in Z}d_G(z)
		\le \sum_{z\in Z}d_{K_{A,B}}(z)
		+2\bigl|E(G)\setminus E(K_{A,B})\bigr|
		\le |A\setminus A_0|b+|B\setminus B_0|a+2q=o(m).
	\end{equation}
	By Lemma~\ref{lem:small-volume}, the non-injective closed walks
	containing at least one vertex of $Z$ contribute $o(m^k)$.
	
	We now consider walks contained in $A_0\cup B_0$.  Let
	$
	p_A=e(G[A_0])$ and $p_B=e(G[B_0])$.
	We first prove
	\begin{equation}\label{eq:good-internal}
		p_A=O(a),\qquad p_B=O(b).
	\end{equation}
	Fix an edge $uv\in E(G[A_0])$.  If all edges between $A$ and $B$ were
	present, we could construct copies of $C_{2k+1}$ containing $uv$ by
	choosing distinct vertices $x_1,\ldots,x_{k-1}\in A\setminus\{u,v\}$
	and $y_1,\ldots,y_k\in B$ and arranging them as
	$
	uvy_1x_1y_2x_2\cdots y_{k-1}x_{k-1}y_ku,
	$
	so that $uv$ is the unique internal edge of each such copy.  Let
	$\mathcal C_{uv}$ denote the family of copies constructed in this way.
	Then
	$$
|\mathcal C_{uv}|=(k-1)!k!\binom{a-2}{k-1}\binom{b}{k}.
$$
	Some of these copies may fail to occur in $G$ because an edge between
	$A$ and $B$ is missing.  We estimate their number in the following two
	cases.
	
	\smallskip
	\noindent\emph{\bf Case 1: the missing edge is incident with $u$ or $v$.}
	Fix $x\in\{u,v\}$ and $y\in B$ with $xy\notin E(G)$.  Among the copies
	of $C_{2k+1}$ constructed above, suppose that $xy$ is an edge.  If
	$x=v$, then $y_1=y$, while if $x=u$, then $y_k=y$.  In either case, it
	remains to choose and order $x_1,\ldots,x_{k-1}$ from
	$A\setminus\{u,v\}$ and the other $k-1$ vertices in $B\setminus\{y\}$.
	Hence the number of such copies is
	$$
	((k-1)!)^2\binom{a-2}{k-1}\binom{b-1}{k-1}
	=O_k(a^{k-1}b^{k-1}).
	$$
	Since
	$u,v\in A_0$, there are at most $2\eta b$ such pairs.  Hence at most
	$$
	2\eta b\cdot O_k(a^{k-1}b^{k-1})
	=O_k(\eta a^{k-1}b^k)
	$$
	of the copies constructed above are lost in Case~1.
	
	\smallskip
	\noindent\emph{\bf Case 2: the missing edge is incident with neither $u$ nor $v$.}
	Fix $x\in A\setminus\{u,v\}$ and $y\in B$ with $xy\notin E(G)$.  If a
	copy in $\mathcal C_{uv}$ contains $xy$, then $x=x_i$ for some
	$1\le i\le k-1$, and $y$ is one of the two neighbours $y_i,y_{i+1}$
	of $x_i$ on the copy.  There are at most $2(k-1)$ choices for these
	positions.  After they are fixed, we choose and order the remaining
	$k-2$ vertices of $A\setminus\{u,v,x\}$ and the remaining $k-1$
	vertices of $B\setminus\{y\}$.  Thus at most
	$$
	2(k-1)(k-2)!(k-1)!
	\binom{a-3}{k-2}\binom{b-1}{k-1}
	=O_k(a^{k-2}b^{k-1})
	$$
	members of $\mathcal C_{uv}$ contain $xy$ as an edge.  There are at
	most $q_-\le q$ such missing pairs, so at most
	$$
	q\cdot O_k(a^{k-2}b^{k-1})
	=O_k(q a^{k-2}b^{k-1})
	$$
	members of $\mathcal C_{uv}$ are lost.  This completes Case~2.
	
	Combining the two cases.
	At most
	$$
	O_k(\eta a^{k-1}b^k)+O_k(q a^{k-2}b^{k-1})
	$$
	members of $\mathcal C_{uv}$ fail to occur in $G$.  Relative to
	$a^{k-1}b^k$, the two losses are
	$
	O_k(\eta)$, $ O_k\Big(\frac{q}{ab}\Big),
	$
	and both are $o(1)$.  Since
	$|\mathcal C_{uv}|=(1-o(1))a^{k-1}b^k$, at least
	$$
	\bigl(1-O_k(\eta)-O_k(q/(ab))-o(1)\bigr)a^{k-1}b^k
	=(1-o(1))a^{k-1}b^k
	$$
	$C_{2k+1}$ in $\mathcal C_{uv}$ occur in $G$, where the
	$o(1)$ term is independent of the choice of $uv$.  The families
	obtained from distinct internal edges are disjoint, because every cycle
	counted in any such family has a unique
	internal edge.  The hypothesis $N(C_{2k+1},G)=O(m^k)$ and
	\eqref{eq:ab} now give
	$$
	p_A(1-o(1))a^{k-1}b^k\le O(a^kb^k),
	$$
	and hence $p_A=O(a)$.  Interchanging $A$ and $B$ proves the other
	part of \eqref{eq:good-internal}.
	
	Recall that $Z=V(G)\setminus(A_0\cup B_0)$, and hence
	$G-Z=G[A_0\cup B_0]$.  Let $\widetilde G$ be the graph obtained from
	$G-Z$ by joining every vertex of $A_0$ to every vertex
	of $B_0$.  Thus every walk of $G-Z$ is a walk of $\widetilde G$.  By
	\eqref{eq:good-sizes},
	$|A_0|=(1-o(1))a$ and
	$|B_0|=(1-o(1))b$, so both sizes tend to infinity; moreover
	\eqref{eq:good-internal} is equivalent to
	$$
	e(\widetilde G[A_0])=O(|A_0|),\qquad
	e(\widetilde G[B_0])=O(|B_0|).
	$$
	Lemma~\ref{lem:collision-complete} therefore shows that the
	non-injective closed walks containing no vertex of $Z$ contribute
	$o((ab)^k)=o(m^k)$.  Adding this to
	the $o(m^k)$ contribution of the walks containing a vertex of $Z$,
	obtained from
	Lemma~\ref{lem:small-volume} and \eqref{eq:exceptional-volume}, proves
	the lemma.
\end{proof}

The next corollary turns the spectral moment estimate into a lower
bound for the number of copies of $C_{2k+1}$.

\begin{corollary}\label{cor:reduction}
Every sequence in Proposition~\ref{prop:moment-large} satisfies
$$
N(C_{2k+1},G_n)\ge
\left(\frac{k-1}{2}-o(1)\right)m_n^k
\ge
\left(
\frac{\lceil k^2/2\rceil (k-1)!}{(k+1)^k}
-o(1)
\right)m_n^k.
$$
\end{corollary}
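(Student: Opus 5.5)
The plan is to combine Proposition~\ref{prop:moment-large} with Lemma~\ref{lem:collision} through the identity
$$
\tr(A(G)^{2k+1})=2(2k+1)N(C_{2k+1},G)+W'_{2k+1}(G).
$$
The hypothesis $N(C_{2k+1},G_n)\le Cm_n^k$ of Lemma~\ref{lem:collision} is not available a priori, so we first argue by subsequences. If the desired lower bound fails, there are $\varepsilon_0>0$ and a subsequence along which
$$
N(C_{2k+1},G_n)\le\Big(\frac{k-1}{2}-\varepsilon_0\Big)m_n^k.
$$
In particular $N(C_{2k+1},G_n)\le Cm_n^k$ with $C=(k-1)/2$ along this subsequence. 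The remaining hypotheses of Lemma~\ref{lem:collision} are exactly those of Proposition~\ref{prop:moment-large}, namely $\dist(G_n,K_{A_n,B_n})=o(m_n)$ and $\min\{|A_n|,|B_n|\}\to\infty$. Lemma~\ref{lem:collision} therefore gives $W'_{2k+1}(G_n)=o(m_n^k)$.

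Next we substitute into the trace identity. Proposition~\ref{prop:moment-large} gives
$$
\tr(A(G_n)^{2k+1})\ge\bigl((2k+1)(k-1)-o(1)\bigr)m_n^k.
$$
Hence
$$
2(2k+1)N(C_{2k+1},G_n)\ge\bigl((2k+1)(k-1)-o(1)\bigr)m_n^k-o(m_n^k),
$$
that is, $N(C_{2k+1},G_n)\ge\bigl(\tfrac{k-1}{2}-o(1)\bigr)m_n^k$. This contradicts the choice of the subsequence, so the first inequality holds for the whole sequence.

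The second inequality is purely numerical: we need $\tfrac{k-1}{2}\ge \lceil k^2/2\rceil (k-1)!/(k+1)^k$.
\begin{itemize}
\item For $k=2$ the right side is $2/9<1/2$.
\item For general $k$, use $\lceil k^2/2\rceil\le (k^2+1)/2$ and $(k-1)!\le k^{k-1}$. The right side is then at most $\tfrac{k^2+1}{2k}\bigl(\tfrac{k}{k+1}\bigr)^{k}$. Since $\bigl(\tfrac{k}{k+1}\bigr)^k\le 1/2$ for $k\ge1$, this is at most $(k^2+1)/(4k)$. That quantity is $\le (k-1)/2$ exactly when $k^2-2k-1\ge0$, i.e.\ for $k\ge3$.
\end{itemize}
Together these cover all $k\ge2$; the comparison holds with a fixed positive margin, so absorbing $o(1)$ terms is harmless.

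The only delicate point is the circularity in applying Lemma~\ref{lem:collision}, which needs an a priori $O(m^k)$ bound on cycles. The contradiction-on-a-subsequence device handles it, and the rest is routine.
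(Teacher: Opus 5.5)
Your proposal is correct and follows the paper's proof essentially step for step. You pass to a subsequence violating the bound to obtain the $O(m_n^k)$ cycle count needed for Lemma~\ref{lem:collision}, then combine $W'_{2k+1}(G_n)=o(m_n^k)$ with the trace identity and Proposition~\ref{prop:moment-large}; the only difference is the numerical check for $k\ge3$, where you use $(k-1)!\le k^{k-1}$ and $(1+1/k)^k\ge2$ (reducing to $k^2-2k-1\ge0$) instead of the paper's $(k-2)!\le k^{k-2}$ and $(1+1/k)^k>1+1/k^2$, and both are valid.
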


\begin{proof}
	Suppose that the first inequality in the statement is false.  Then there are
	$\varepsilon_0>0$ and a subsequence, still indexed by $n$, such that
	$$
	N(C_{2k+1},G_n)\le\left(\frac{k-1}{2}-\varepsilon_0\right)m_n^k.
	$$
	Along this subsequence $N(C_{2k+1},G_n)=O(m_n^k)$, so
	Lemma~\ref{lem:collision} gives $W'_{2k+1}(G_n)=o(m_n^k)$.  Therefore,
	by Proposition~\ref{prop:moment-large},
	\begin{align*}
		2(2k+1)N(C_{2k+1},G_n)
		=\tr(A(G_n)^{2k+1})-W'_{2k+1}(G_n)
		\ge\bigl((2k+1)(k-1)-o(1)\bigr)m_n^k.
	\end{align*}
	Dividing by $2(2k+1)$ gives
	$N(C_{2k+1},G_n)\ge((k-1)/2-o(1))m_n^k$, contrary to the choice of the subsequence.  Hence the
	first inequality in the statement holds.
	
	For $k=2$,
	$\lceil k^2/2\rceil (k-1)!/(k+1)^k=2/9<1/2$.
	For $k\ge3$, using $(k-2)!\le k^{k-2}$ and
	$(1+1/k)^k>1+1/k^2$, we obtain
	$$
	\frac{\lceil k^2/2\rceil (k-1)!}{(k+1)^k}
	\le\frac{(k^2+1)(k-1)!}{2(k+1)^k}
	<\frac{k-1}{2}.
	$$
	This proves the second lower bound.
\end{proof}

\section{Proof of the main theorem}

\begin{proof}[\bf Proof of Theorem~\ref{thm:main}]
It suffices to consider
$$
0<\varepsilon<
\frac{\lceil k^2/2\rceil (k-1)!}{(k+1)^k}.
$$
It is enough to prove that there exists $m_0=m_0(k,\varepsilon)$ such
that every graph $G$ with $e(G)=m\ge m_0$ and $\rho(G)>g_k(m)$
satisfies
$$
N(C_{2k+1},G)\ge
\left(
\frac{\lceil k^2/2\rceil(k-1)!}{(k+1)^k}
-\varepsilon
\right)m^k.
$$
Suppose not. Then there is a sequence of graphs $G_n$ with
$m_n=e(G_n)\to\infty$ such that
$\rho(G_n)>g_k(m_n)$ and
$$
N(C_{2k+1},G_n)
<
\left(
\frac{\lceil k^2/2\rceil (k-1)!}{(k+1)^k}
-\varepsilon
\right)m_n^k.
$$
In particular,
$N(C_{2k+1},G_n)=o(m_n^{k+1/2})$. By
Observation~\ref{obs:threshold-optimization}~(i),
$g_k(m_n)>\sqrt{m_n}$ for all sufficiently large $n$, and hence
$\rho(G_n)>\sqrt{m_n}$.

By Lemma~\ref{lem:stability}, there exist disjoint sets $A_n,B_n$
such that
$\dist(G_n,K_{A_n,B_n})=o(m_n)$.
We may delete the isolated vertices of $G_n$ and replace $A_n,B_n$
by their intersections with the remaining vertex set. This does not
change $e(G_n)$, $\rho(G_n)$, or $N(C_{2k+1},G_n)$, and it can only
decrease $\dist(G_n,K_{A_n,B_n})$. We may also interchange $A_n$ and
$B_n$ when necessary.
For all sufficiently large $n$, both $A_n$ and $B_n$ are nonempty.
Indeed, if one part were empty, then $K_{A_n,B_n}$ would have no
edges, so
$\dist(G_n,K_{A_n,B_n})=m_n$, contrary to
$\dist(G_n,K_{A_n,B_n})=o(m_n)$.

If $\min\{|A_n|,|B_n|\}$ does not tend to infinity, then, after
passing to a subsequence and interchanging the two parts if necessary,
we may assume that $|A_n|$ is a fixed positive integer.
By applying
Proposition~\ref{prop:bounded} with $S_n=A_n$ and $D_n=B_n$, we obtain
$$
N(C_{2k+1},G_n)
\ge
\left(
\frac{\lceil k^2/2\rceil (k-1)!}{(k+1)^k}
-o(1)
\right)m_n^k,
$$
a contradiction.

Therefore $\min\{|A_n|,|B_n|\}\to\infty$. By
Corollary~\ref{cor:reduction},
$$
N(C_{2k+1},G_n)
\ge
\left(
\frac{\lceil k^2/2\rceil (k-1)!}{(k+1)^k}
-o(1)
\right)m_n^k,
$$
a contradiction. This proves the lower bound.

Finally, by Proposition~\ref{prop:construction}, for every
sufficiently large $m$, there exists an $m$-edge graph $G$ satisfying
$\rho(G)>g_k(m)$ and
$$
N(C_{2k+1},G)
=
\left(
\frac{\lceil k^2/2\rceil (k-1)!}{(k+1)^k}
+o(1)
\right)m^k.
$$
Therefore the constant
$\lceil k^2/2\rceil (k-1)!/(k+1)^k$ is asymptotically best possible.
\end{proof}

\section*{Acknowledgments}
The second author is supported by the Natural Science Foundation of Shanghai under Grant No. 25ZR1402390,
the third author by the National Key R\&D Program of China (No.~2022YFA1006400), the National Natural Science Foundation of China (No.~12571376) and Shanghai Institute for Mathematics and Interdisciplinary Sciences, SIMIS (ID‑26‑AMS‑003).
During the early exploratory stage of this work,  language‑model‑based tools were used to brainstorm potential proof strategies. 
Outputs from these tools were used solely for informal inspiration, and the authors take full responsibility for all content in the final manuscript.

	




\begin{thebibliography}{99}

\bibitem{Bollobas-Nikiforov-2007} B. Bollob\'as and V. Nikiforov, Cliques and the spectral radius, 
\emph{J. Combin. Theory Ser. B} \textbf{97} (2007), 859--865.

\bibitem{Chen-Li-2026} H. Chen and Y. Li, An edge-spectral supersaturation of Mubayi's theorem for color-critical graphs, 
arXiv:2607.01073 (2026).

\bibitem{Chen-Li-Tang}
H. Chen, Y. Li, Q. Tang, Supersaturation in Nosal graphs: Triangles and books, arXiv:2607.16746 (2026).

\bibitem{deCaen} D. de Caen, An upper bound on the sum of squares of degrees in a graph, 
\emph{Discrete Math.} \textbf{185} (1998), 245--248.

\bibitem{Erdos-Simonovits-1983} P. Erd\H{o}s and M. Simonovits, Supersaturated graphs and hypergraphs, 
\emph{Combinatorica} \textbf{3} (1983), 181--192.

\bibitem{Fang-Li-Lin-Ma-2025} L. Fang, Y. Li, H. Lin and J. Ma, Spectral supersaturation for color-critical graphs, 
arXiv:2512.22482 (2025).

\bibitem{Fang-Lin-Zhai-2026} L. Fang, H. Lin and M. Zhai, Counting color-critical subgraphs under Nikiforov's condition, 
arXiv:2603.14964 (2026).

\bibitem{Fang-Lin-Zhai-2026-tripartite} L. Fang, H. Lin and M. Zhai, Edge-spectral supersaturation for tripartite color-critical graphs, 
arXiv:2608.04485 (2026).

\bibitem{Horn-Johnson-2013} R. A. Horn and C. R. Johnson, \emph{Matrix Analysis}, 2nd ed.,  Cambridge University Press, Cambridge, 2013.

\bibitem{Li-Feng-Peng-2025a} Y. Li, L. Feng and Y. Peng, A spectral Erd\H{o}s--Faudree--Rousseau theorem, 
\emph{J. Graph Theory} \textbf{110} (2025), 408--425.

\bibitem{Li-Feng-Peng-2025b} Y. Li, L. Feng and Y. Peng, Spectral supersaturation: triangles and bowties, 
\emph{European J. Combin.} \textbf{128} (2025), Paper No. 104171.

\bibitem{Li-Lin-Liu-Zhang-2026} Y. Li, W. Lin, H. Liu and S. Zhang, Spectral Sidorenko inequalities and edge-spectral supersaturation, 
arXiv:2605.26614 (2026).

\bibitem{Li-Liu-Zhang-2025-turan} Y. Li, H. Liu and S. Zhang, Edge-spectral Tur\'an theorems for color-critical graphs with applications, 
arXiv:2511.15431 (2025).

\bibitem{Li-Lu-Peng-2024} Y. Li, L. Lu and Y. Peng, A spectral Erd\H{o}s--Rademacher theorem, 
\emph{Adv. in Appl. Math.} \textbf{158} (2024), Paper No. 102720.

\bibitem{Li-Zhai-Shu-2024-cycles} X. Li, M. Zhai and J. Shu, A Brualdi--Hoffman--Tur\'an problem on cycles, 
\emph{European J. Combin.} \textbf{120} (2024), Paper No. 103966.

\bibitem{Mubayi-2010} D. Mubayi, Counting substructures I: color critical graphs, 
\emph{Adv. Math.} \textbf{225} (2010), 2731--2740.

\bibitem{Nikiforov-2002} V. Nikiforov, Some inequalities for the largest eigenvalue of a graph, 
\emph{Combin. Probab. Comput.} \textbf{11} (2002), 179--189.

\bibitem{Nikiforov-2009} V. Nikiforov, A spectral Erd\H{o}s--Stone--Bollob\'as theorem, 
\emph{Combin. Probab. Comput.} \textbf{18} (2009), 455--458.

\bibitem{Nikiforov-2009-saturation} V. Nikiforov, Spectral saturation: inverting the spectral Tur\'an theorem, 
\emph{Electron. J. Combin.} \textbf{16} (2009), Paper No. 33.

\bibitem{Ning-Zhai-2023} B. Ning and M. Zhai, Counting substructures and eigenvalues I: triangles, 
\emph{European J. Combin.} \textbf{110} (2023), Paper No. 103685.

\bibitem{Ning-Zhai-2025} B. Ning and M. Zhai, Counting substructures and eigenvalues II: quadrilaterals, 
\emph{Electron. J. Combin.} \textbf{32} (2025), Paper No. 4.1.

\bibitem{Pikhurko-Yilma-2017} O. Pikhurko and Z.~B. Yilma, Supersaturation problem for color-critical graphs, 
\emph{J. Combin. Theory Ser. B} \textbf{123} (2017), 148--185.

\bibitem{Zhai-Lin-Shu-2021} M. Zhai, H. Lin and J. Shu, Spectral extrema of graphs with fixed size: cycles and complete bipartite graphs, 
\emph{European J. Combin.} \textbf{95} (2021), Paper No. 103322.

\end{thebibliography}
\end{document}